\newtheorem{lemma}{Lemma}
\newtheorem{theorem}{Theorem}
\newtheorem{cor}{Corollary}
\newtheorem{remark}{Remark}
\newtheorem{claim}{Claim}
\newtheorem{definition}{Definition}
\newtheorem{example}{Example}
\newcommand{\R}[0]{\ensuremath \mathbb{R}}
\newcommand{\Z}[0]{\ensuremath \mathbb{Z}}
\newcommand{\ones}{\bm{1}}
\newcommand{\T}{\mathcal{T}}
\newcommand{\leaves}{\textrm{leaves}}
\newcommand{\mom}[1]{{\left\vert\kern-0.25ex\left\vert\kern-0.25ex\left\vert #1 \right\vert\kern-0.25ex\right\vert\kern-0.25ex\right\vert}}
\newcommand{\algmargin}{\the\ALG@thistlm}
\newlength{\whilewidth}
\algnewcommand{\parState}[1]{\State%
  \parbox[t]{\dimexpr\linewidth-\algmargin}{\strut #1\strut}}
\newcounter{mynotes}
\newcommand{\conv}{\textrm{conv}}
\newcommand{\proj}{\textrm{proj}}
\title{Branch-and-Bound versus Lift-and-Project Relaxations \\ in Combinatorial Optimization}
\author[1]{Gérard Cornuéjols\thanks{gc0v@andrew.cmu.edu} }
\author[2]{Yatharth Dubey\thanks{ydubey@illinois.edu}}
\affil[1]{Tepper School of Business, Carnegie Mellon University}
\affil[2]{Industrial and Enterprise Systems Engineering, University of Illinois at Urbana-Champaign}
\date{\today }
\begin{document}

\maketitle

\begin{abstract}
 In this paper, we consider a theoretical framework for comparing branch-and-bound with classical lift-and-project hierarchies. We simplify our analysis of streamlining the definition of branch-and-bound. We introduce ``skewed $k$-trees'' which give a hierarchy of relaxations that is incomparable to that of Sherali-Adams, and we show that it is much better for some instances. We also give an example where lift-and-project does very well and branch-and-bound does not. Finally, we study the set of branch-and-bound trees of height at most $k$ and effectively ``squeeze'' their effectiveness between two well-known lift-and-project procedures. 
\iffalse
Branch-and-bound and cutting planes are two methods of tightening relaxations for integer programming problems, and understanding the relative strengths of the two methods has been a consistent focus of research. In this paper, we consider a theoretical framework for comparing these strategies: when is it useful to use branch-and-bound to construct strong relaxations compared to lift-and-project based cutting plane methods? We begin by formally introducing branch-and-bound based relaxations and the relevent lift-and-project methods. We then introduce the ``skewed $k$-tree'' which gives a hierarchy of relaxations that is incomparable to that of Sherali-Adams, in particular it is much better for some instances. Then, we give an example where lift-and-project does very well and branch-and-bound does not. Finally, we study the set of branch-and-bound trees of height at most $k$ and effectively ``squeeze'' their effectiveness between two well-known lift-and-project procedures, in particular that of  Balas-Ceria-Cornuéjols and canonical lift-and-project. 
\fi
\end{abstract}

\section{Introduction}

In integer programming, branching and cutting are two basic algorithmic strategies at the heart of current solvers. For any application or subproblem at hand, deciding whether to branch or to cut can drastically impact the computing time. Understanding this trade-off has been of interest for a long time and is addressed, for example, in papers such as \cite{basu2023complexity} and the references therein. In this paper, we consider a new theoretical framework for comparing these two strategies. 

Many combinatorial optimization problems can be written in the following way: $\max \{cx : x \in P \cap \{0,1\}^n\}$, where $c \in \R^n$, $P := \{x \in [0,1]^n : Ax \geq b\}$, with $A \in \R^{m \times n}, b \in \R^m$. Define $P_I := \conv(P \cap \{0,1\}^n)$. A fundamental goal in integer programming and combinatorial optimization is to obtain a relaxation $Q$ such that $P_I \subseteq Q \subseteq P$, where $Q$ is as ``close'' to $P_I$ as possible; see \cite{Conforti_Cornuéjols_Zambelli_2014, schrijver1998theory, schrijver2003combinatorial, wolsey1999integer} for more on integer programming and combinatorial optimization. 

One way to obtain such relaxations is via an \textit{extended formulation}, i.e. a polyhedron $Q_E := \{(x,y) : A'x + B'y \geq b'\} \subseteq \R^{n + p}$, where $A' \in \R^{r \times n}, B' \in \R^{r \times p}, b' \in \R^{r}$, such that $P_I \subseteq \proj_x \left(Q_E\right) \subseteq P$. When $r,p$ are polynomial in $n$, we refer to $Q_E$ as a \textit{compact extended formulation}; see \cite{conforti2010extended} for more on extended formulations.  Hierarchies of extended formulations can be obtained by iterating the procedures of 
Balas-Ceria-Cornuejols (BCC) \cite{balas1993lift}, and Lovasz-Schrijver (LS) \cite{lovasz1991cones}, or by applying the Sherali-Adams procedure at various levels (SA) \cite{sherali1990hierarchy}; see \cite{laurent2003comparison} for a comparison between the latter two procedures and \cite{cornuejols2001elementary} for a broader comparison of cut-operators. 

Another way of generating extended formulations arises from the branch-and-bound method (BB) \cite{land2010automatic}. Consider a partial enumeration tree $\T$ where the leaves represent subproblems $P_v := P \cap \{x : C_v\}$ where $C_v$ denotes the set of branching constraints from the root to node $v$. The set $\bigcup_{v \in \leaves(\T)} P_v$ is contained in $P$ and contains all feasible integer solutions. In other words, letting $\T(P) := \conv\left( \bigcup_{v \in \leaves(\T)} P_v \right)$, we have $P_I \subseteq \T(P) \subseteq P$. The polytope $\T(P)$ has an extended formulation by a theorem of Balas \cite{balas1985disjunctive} and this formulation is polynomial size if the number of leaves of $\T$ is polynomial. 

% , especially Sherali-Adams since it is the strongest of these and the most commonly used for algorithm design in the existing literature. More formally, given a combinatorial optimization problem $\max \{cx : x \in P \cap \{0,1\}^n\}$ and a formulation budget $N,M$ we would like to find necessary and sufficient conditions for the following relationship to hold:
% \begin{equation}\label{eq:main_relationship}
% \max\{ cx : (x,y) \in Q_{SA}\} \geq \gamma_1 \cdot \max\{ cx : (x,y) \in Q_{BB}\} \geq \gamma_2 \cdot \max \{ cx : x \in P_I\}
% \end{equation}
% where $Q_{BB}, Q_{SA}$ are the tightest possible formulations constructed by BB and SA respectively with at most $N$ variables and $M$ constraints. It will be of particular interest to understand when $\gamma_1$ is large (e.g. $\Omega(n)$, or certainly larger than $1$) and $\gamma_2$ is small (e.g. not much more than $1$). Note that both $\gamma_1,\gamma_2$ are upper bounded by the integrality gap of the formulation $P$. 

The potential of extended formulations arising from BB is made evident by the following example. 

\begin{example}
Let $K_n$ be the complete graph on $n$ vertices $V$ and let $P = \{x \in \R^n_+ : x_u + x_v \leq 1 \ \forall \ u,v \in V\}$ be the corresponding fractional stable set polytope. Observe that the inequality $\sum_{v \in V} x_v \leq 1$ is valid for $P_I$. 

BB is able to obtain an extended formulation $Q_{BB}$, with $O(n^2)$ variables and $O(n^3)$ constraints, such that $\sum_{v \in V} x_v \leq 1$ is valid for $Q_{BB}$. This is because there is a BB tree $\T$ of size $O(n)$ such that $\T(P) = P_I$. In particular, observe that after branching on some variable $x_v$, the side fixing $x_v = 1$ only contains the point where all other $x_u = 0$ for $u \not = v$; therefore, the desired inequality is valid for any tree where each variable is branched on once and only once, and a node is branched on only if it is either the root or a child corresponding to the branch $x_v = 0$ for some $v \in V$ (i.e. a node with a constraint $x_v = 1$ for any $v \in V$ is never branched on); furthermore, all leaves of such a tree are integral. 

By contrast, SA produces an extended formulation $Q_{SA}$ with exponentially many variables and constraints for $\sum_{v \in V} x_v \leq 1$ to be valid for $Q_{SA}$. In particular, the inequality $\sum_{v \in V} x_v \leq 1$ is not valid for $SA^{n-3}(P)$, which has an exponential size extended formulation. This is because the point $\frac{1}{k+2}\ones \in SA^k (P)$, for any $1 \leq k \leq n-2$ (see Section 6.1 of \cite{laurent2003comparison}). 
\end{example}
This example illustrates the fact that extended formulations derived from BB can outperform those generated by the SA hierarchy.
The goal of this work is to better understand how BB compares to lift-and-project on other families of discrete optimization problems. 
Specifically, we seek to identify when one approach (BB or lift-and-project) provides a tight compact extended formulation while the other does not. 

We note that \cite{singh2010improving} studied the Gomory-Chvàtal (GC) operator \cite{gomory2010outline, chvatal1973edmonds} in the same spirit: they showed that GC significantly outperforms SA for the maximum matching problem in $k$-uniform hypergraphs; however, GC performs as poorly as SA for max cut, unique label cover, and $k\text{-}{CSP}_q$. They concluded that the ``positive result gives strong motivation for studying GC cuts as an algorithmic technique.'' This paper shows that a similar conclusion can be made for studying BB trees.

In Section 2, we introduce branch-and-bound formally, in the form used in this paper. The polytope  $\T(P)$ is derived from the corresponding enumeration tree. We also discuss the different variants of lift-and-project that we will use as a comparison, and we prove technical results that will be used later in the paper.

In Section 3, we introduce a special form of enumeration tree called ``skewed $k$-tree''. We show that, for fixed $k$, these trees have polynomial size. We identify two combinatorial optinization problems where skewed $k$-tree solve the problem to optimality. Namely, we show that, for the maximum clique problem on sparse graphs, and the uniform knapsack problem with small capacity, we can construct a polynomial size skewed $k$-tree such that  $\T(P) = P_I$. By contrast, for these two combinatorial optimization problems,  any polynomial-size formulation in the Sherali-Adams hierarchy has a projection $SA^k(P)$ into the $x$-space  that is strictly contains $P_I$. 

In Section 4, we prove a similar result for combinatorial problems defined by no-good constraints, using a different family of enumeration trees. We construct a polynomial size BB tree $\T$ such that  $\T(P) = P_I$ but even when there is just one no-good point, any SA formulation that produces $P_I$ has an exponential number of variables and constraints.

In Section 5, we give an example where lift-and project does very well while banch-and-bound does poorly. Specifically, we exhibit a polytope $P$ such that no branch-and-bound tree of size less than $2^{(n-1)/6}$ produces $P_I$, while lift-and-project generates $P_I$ in two rounds. 

In Section 6, we consider the set of BB trees with height at most $k$. We show that this family of trees produces a bound at least as strong as any sequential convexification of $k$ variables as per the BCC procedure, but no stronger than $k$ iterations of the canonical lift-and-project procedure. In other words, we show that the bound obtained from this  family of  BB trees is squeezed between two classical  lift-and-project bounds.

\section{Extended Formulations} \label{section2}

We begin by formally defining branch-and-bound in the form we study here; this comes almost directly from the interpretation of branch-and-bound in Section 2 of \cite{dey2023lower}, but here we only consider trees constructed via variable disjunctions (as opposed to general disjunctions). We then introduce the lift-and-project operators of interest in this paper.

\subsection{Relaxations Based on Branch-and-Bound}

We simplify our analysis of branch-and-bound by removing two conditions typically assumed of branch-and-bound: (i) the requirement that the partitioning into two subproblems (which correspond to the child nodes of the given node) is done in  such a way that the optimal LP solution of the parent node is not included in either subproblem, and (ii) branching is not done on pruned nodes. By removing these conditions, we can talk about a BB tree independent of the underlying polytope -- it is just a binary tree (i.e. each node has $0$ or $2$ child nodes). The root-node has an empty set of \emph{branching constraints}. If a node has two child nodes, these are obtained by applying a disjunction $x_j = 0 \,\vee\, x_j = 1$ for some $j \in [n]$, where each child node adds one of these constraints to its set of branching constraints together with all the branching constraints of the parent node. Finally, note that since a BB tree is a binary tree, the total number of nodes of a BB tree with $N$ leaf-nodes is $2N - 1$.

\begin{definition}\label{def:abstract_bb}
Given a branch-and-bound tree $\mathcal{T}$, applied to a polytope $P \subseteq [0,1]^n$, and a node $v$ of the tree:
\begin{itemize}\vspace{-8pt}
\item We denote the number of nodes of the branch-and-bound tree $\mathcal{T}$ by $|\mathcal{T}|$. This is what is termed \emph{the size} of this tree.
\item We denote by $C_v$ the set of branching constraints of $v$ (as explained above, these are the constraints added by the branch-and-bound tree along the path from the root node to $v$).
\item For any leaf $v \in \leaves(\T)$, we denote $J_v^0 := \{j : \{x_j = 0\} \in C_v\}$ and $J_v^1 := \{j : \{x_j = 1\} \in C_v\}$.
\item We say that the \emph{height} of a node $v \in \T$ is the number of its branching constraints; formally, $h(v) = |C_v|$ for any $v \in \T$. The height of $\T$ is $h(\T) := \max_{v \in \T} h(v)$. Observe  that $h(\text{root}) = 0$, and thus $h(\T) \leq n$. 
\item We call the feasible region defined by the LP relaxation $P$ and the branching constraints at node $v$ the \emph{atom} of this node; formally, we denote $P_v = P \cap \{x : C_v\}$ is the atom corresponding to $v$.
\item We let $\mathcal{T}(P)$ denote the {convex hull of the union of the atoms corresponding to the} leaves of $\mathcal{T}$ when applied to polytope $P$, i.e., $$\mathcal{T}(P) = \conv \left(\bigcup_{v \in \leaves(\mathcal{T})} P_v\right).$$
\item For $x^* \in P \setminus P_I$, we say that $\T$ \emph{separates} $x^*$ if $x^* \not \in \T(P)$. 
\end{itemize} 
\end{definition}

\begin{theorem}[Balas \cite{balas1985disjunctive}, see Theorem 5.1 of \cite{conforti2010extended}] \label{thm:bb_ef_size}
There is an extended formulation for $\T(P)$ with $O(|\T|(n+1))$ variables and $O(|\T|m)$ constraints. 
\end{theorem}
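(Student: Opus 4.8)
The plan is to recognize $\T(P)$ as the convex hull of a finite union of bounded polyhedra—one per leaf of $\T$—and then write down the classical disjunctive extended formulation of Balas, verify that its projection equals $\T(P)$, and count its size.

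First I would fix notation for the union. Writing $L := \leaves(\T)$, Definition~\ref{def:abstract_bb} gives $\T(P) = \conv\left(\bigcup_{v \in L} P_v\right)$, where each atom $P_v = P \cap \{x : C_v\}$ is described by the $m$ inequalities of $Ax \ge b$, the $2n$ box inequalities $0 \le x \le 1$, and the at most $n$ equalities coming from the branching constraints $C_v$. In particular each $P_v$ is a polytope contained in $[0,1]^n$, hence bounded, and is described by $O(m+n)$ inequalities, which is $O(m)$ under the usual convention (as in the referenced result) that the box constraints are already folded into the system describing $P$. Crucially, because the box constraints $0 \le x \le 1$ appear in every atom, the recession cone of each $P_v$ is $\{0\}$ whether or not $P_v$ happens to be empty.

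Next I would write down Balas's formulation. For each leaf $v \in L$ introduce a vector $y^v \in \R^n$ (a ``copy'' of $x$) and a scalar multiplier $\lambda_v \ge 0$, and, writing $A^v x \ge b^v$ for the inequality description of $P_v$ above, set
\[
Q_E := \left\{ \big(x, (y^v)_v, (\lambda_v)_v\big) : \ x = \sum_{v \in L} y^v, \ \ A^v y^v \ge \lambda_v b^v \ \ \forall v \in L, \ \ \sum_{v \in L} \lambda_v = 1, \ \ \lambda_v \ge 0 \ \forall v \right\}.
\]
The claim to verify is $\proj_x(Q_E) = \T(P)$. For $\proj_x(Q_E) \subseteq \T(P)$, given a feasible point I would split the leaves by whether $\lambda_v > 0$: when $\lambda_v > 0$ the constraint $A^v y^v \ge \lambda_v b^v$ says $y^v/\lambda_v \in P_v$, and when $\lambda_v = 0$ the constraint $A^v y^v \ge 0$ together with the trivial recession cone forces $y^v = 0$; hence $x = \sum_{v : \lambda_v > 0} \lambda_v (y^v/\lambda_v)$ is a convex combination of points of the $P_v$, so $x \in \T(P)$. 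For the reverse inclusion, any $x \in \T(P)$ can be written as $x = \sum_{v} \lambda_v z^v$ with $z^v \in P_v$ (and $\lambda_v = 0$ on empty atoms); setting $y^v := \lambda_v z^v$ produces a feasible point of $Q_E$ projecting to $x$.

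Finally I would count. By the binary-tree structure noted before Definition~\ref{def:abstract_bb}, a tree with $N$ leaves has $|\T| = 2N-1$, so $|L| = (|\T|+1)/2 = O(|\T|)$. The variables are $x$ (that is $n$) together with $y^v$ and $\lambda_v$ for each leaf, i.e. $n + |L|(n+1) = O(|\T|(n+1))$ variables. The constraints are the $n$ linking equations $x = \sum_v y^v$, the single equation $\sum_v \lambda_v = 1$, the $|L|$ sign constraints $\lambda_v \ge 0$, and for each leaf the $O(m)$ inequalities $A^v y^v \ge \lambda_v b^v$, for a total of $O(|\T|m)$ constraints. The one point needing genuine care—rather than routine bookkeeping—is the treatment of the $\lambda_v = 0$ leaves and, relatedly, the empty atoms: this is exactly where boundedness of the atoms enters, and it is why retaining the box constraints $0 \le x \le 1$ inside every $A^v x \ge b^v$ matters, since it forces a trivial recession cone and hence $\lambda_v = 0 \Rightarrow y^v = 0$ without having to adjoin any recession-cone generators to the formulation.
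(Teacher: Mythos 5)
Your proposal is correct and is precisely the standard disjunctive-programming argument behind the cited result (Theorem 5.1 of \cite{conforti2010extended}); the paper itself gives no proof, only the citation to Balas. You correctly isolate the one genuine subtlety --- that including the box constraints $0 \le x \le 1$ in each atom's description forces a trivial recession cone, so $\lambda_v = 0$ implies $y^v = 0$ and empty atoms cause no trouble --- and your size count matches the stated bounds (with the reasonable convention that $m$ absorbs the $O(n)$ box and branching constraints, without which the constraint count would read $O(|\T|(m+n))$).
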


We will study different types of BB trees in Sections~\ref{section2}-\ref{section6}. In Section~\ref{section3} we introduce skewed $k$-trees and we show that they can be advantageous for generating the relaxation $\mathcal{T}(P)$. In Section~\ref{section4} we exhibit a BB tree that works well for combinatorial problems defined by no-good constraints. In Section~\ref{section5} we analyze BB trees with bounded number of leaves and in Section~\ref{section6} BB trees with bounded height.

\subsection{Lift-and-Project} 

In this section we review three classical lift-and-project operators and we present technical lemmas that will be useful in subsequent sections.

First we introduce the canonical lift-and-project operator; see Section 5.4 of \cite{Conforti_Cornuéjols_Zambelli_2014}. 

$$L(P) = \bigcap_{i \in [n]} \conv((P \cap \{x : x_i = 0\}) \cup (P \cap \{x : x_i = 1\}))$$

Let    $L^0(P) := P$ and     $L^k(P) := L\left(L^{k-1}(P)\right)$ for any $k \geq 1$.

We will use the following technical results. 
% \red{The below lemma is still somewhat informal since the two sets we are relating have different dimension. The RHS should actually say $\{x : x_j = a, \proj_{[n] \setminus j}(x) \in L^k(\proj_{[n] \setminus j}(\{x \in P : x_j = a\})) \}$. Will go back and make this change later.} 

\begin{lemma}\label{lem:nest_l}
$L^k(P) \cap \{x: x_j = a\} = L^k(\{x \in P : x_j = a\})$ for any $j \in [n], k \in [n], a \in \{0,1\}$. 
\end{lemma}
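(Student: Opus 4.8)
The plan is to induct on $k$, reducing everything to a single ``one-step'' identity that I prove by hand. As a preliminary observation, note that $L(S) \subseteq S$ for every polytope $S$, since each term $\conv((S \cap \{x : x_i = 0\}) \cup (S \cap \{x : x_i = 1\}))$ of the intersection defining $L(S)$ is contained in $\conv(S) = S$; in particular $L^{k-1}(P) \subseteq P \subseteq [0,1]^n$, so every iterate stays a polytope in the unit cube. The base case $k = 0$ is immediate: $L^0(P) \cap \{x : x_j = a\} = P \cap \{x : x_j = a\} = L^0(\{x \in P : x_j = a\})$. For the inductive step it suffices to establish, for every polytope $R \subseteq [0,1]^n$, the claim
$$L(R) \cap \{x : x_j = a\} = L\left(R \cap \{x : x_j = a\}\right),$$
because applying it with $R = L^{k-1}(P)$ and then using the induction hypothesis $L^{k-1}(P) \cap \{x : x_j = a\} = L^{k-1}(\{x \in P : x_j = a\})$ chains to the desired equality at level $k$.

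Write $R_a := R \cap \{x : x_j = a\}$. The inclusion $L(R_a) \subseteq L(R) \cap \{x : x_j = a\}$ is the easy half. First, $L$ is monotone: if $S \subseteq T$ then each defining term for $S$ lies inside the corresponding term for $T$, so $L(S) \subseteq L(T)$; applying this to $R_a \subseteq R$ gives $L(R_a) \subseteq L(R)$. Second, $L(R_a) \subseteq R_a \subseteq \{x : x_j = a\}$ by the containment $L(S) \subseteq S$ noted above. Combining the two gives the inclusion.

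The reverse inclusion $L(R) \cap \{x : x_j = a\} \subseteq L(R_a)$ is where the real work lies, and I expect it to be the main obstacle. Fix $z \in L(R)$ with $z_j = a$; I must show, for every $i \in [n]$, that $z \in \conv((R_a \cap \{x : x_i = 0\}) \cup (R_a \cap \{x : x_i = 1\}))$. Since $z$ lies in the $i$-th term of $L(R)$, the standard description of the convex hull of a union of two convex sets lets me write $z = \lambda p + (1 - \lambda) q$ with $p \in R \cap \{x : x_i = 0\}$, $q \in R \cap \{x : x_i = 1\}$, and $\lambda \in [0,1]$ (with the obvious simplification if one set is empty). The difficulty is that this decomposition is tailored to coordinate $i$, so a priori neither $p$ nor $q$ satisfies $x_j = a$, and it is not clear the decomposition restricts to $R_a$.

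The feature that rescues the argument is that $a \in \{0,1\}$ is an \emph{extreme} value of the coordinate range $[0,1]$. From $z_j = \lambda p_j + (1-\lambda) q_j = a$ with $p_j, q_j \in [0,1]$: if $a = 0$, nonnegativity forces $\lambda p_j = (1-\lambda) q_j = 0$, while if $a = 1$ the same reasoning applied to $1 - p_j, 1 - q_j \geq 0$ forces $\lambda(1 - p_j) = (1 - \lambda)(1 - q_j) = 0$. In both cases any point carrying positive weight already satisfies $x_j = a$, so the points actually contributing to the combination lie in $R_a \cap \{x : x_i = 0\}$ and $R_a \cap \{x : x_i = 1\}$ (the boundary cases $\lambda \in \{0,1\}$ simply place $z$ in one of these sets directly). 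Hence $z$ belongs to the required convex hull for each $i$, so $z \in L(R_a)$, which proves the one-step claim and, by the induction above, the lemma. The only routine point to check is the bookkeeping for empty intersections, which is harmless since an empty set contributes nothing to the unions involved.
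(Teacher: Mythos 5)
Your proof is correct and follows essentially the same route as the paper's: both reduce to the one-step identity $L(R) \cap \{x : x_j = a\} = L(R \cap \{x : x_j = a\})$ via the same monotonicity argument for the easy inclusion and the same observation for the hard one (since $a \in \{0,1\}$ is an extreme value of the coordinate range, every point carrying positive weight in the convex decomposition must already satisfy $x_j = a$), then chain through the induction on $k$. Your explicit statement of the one-step claim for an arbitrary polytope $R \subseteq [0,1]^n$ is a slight improvement in bookkeeping, since the paper's inductive step implicitly applies its ``base case'' to $L^k(P)$ rather than to $P$ itself.
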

\begin{proof}
First, we prove the lemma for $k = 1$. Also, we assume $a = 0$ as the argument for $a = 1$ follows very easily. 

First, we show the easier containment, $L(P \cap \{x : x_j = 0\}) \subseteq L(P) \cap \{x : x_j = 0\}$. Clearly $L(P \cap \{x : x_j = 0\}) \subseteq \{x : x_j = 0\}$ and it is also easy to see that $L(P \cap \{x : x_j = 0\}) \subseteq L(P)$ by monotonicity.

Now we show $L(P) \cap \{x : x_j = 0\} \subseteq L(P \cap \{x : x_j = 0\})$. Let $\bar{x} \in L(P) \cap \{x : x_j = 0\}$. 
It suffices to prove the claim that $\bar{x} \in \conv((P \cap \{x : x_j=0, x_i = 0\}) \cup (P \cap \{x : x_j=0, x_i = 1\}))$ for each $i$. The claim is true for $i = j$ since $\bar{x} \in L(P) \cap \{x : x_j = 0\}$ implies $\bar{x} \in P \cap \{x : x_j = 0\}$. Now consider $i \not= j$. The claim is true when $\bar{x}_i =0$ or $\bar{x}_i =1$ because $\bar{x} \in L(P)$ implies  $\bar{x} \in P$ and therefore  
$\bar{x} \in P \cap \{x : x_j=0, x_i = 0\}$ or $\bar{x} \in P \cap \{x : x_j=0, x_i = 1\}$. So we may assume $0 < \bar{x}_i <1$.
Since $x \in L(P)$, there is some $x^0 \in \{x \in P : x_i = 0\}$ and $x^1 \in \{x \in P : x_i = 1\}$ such that $x \in \conv(x^0, x^1)$. Moreover, it must hold that $x^0_j = x^1_j = 0$, since otherwise would imply $x_j > 0$. This proves the claim.

We now prove the lemma inductively. Assume $L^k(P) \cap \{x: x_j = a\} = L^k(\{x \in P : x_j = a\})$. Then $L^{k+1}(P) \cap \{x : x_j = a\} = L(L^k(P)) \cap \{x : x_j = a\} = L(L^k(P) \cap \{x : x_j = a\}) = L(L^k (P \cap \{x : x_j = a\})) = L^{k+1}(P \cap \{x : x_j = a\})$, where the first and last equality follow from the definition of $L^k$, the second from the base case of the induction proven above, and the third from the inductive hypothesis. 
\end{proof}

\begin{lemma}[Lemma 3.1 of \cite{dash2015relative}]\label{lem:height_lemma}
Consider $(a,b) \in \R^n \times \R$ with $a \not = 0$ and let $s^1, ..., s^n$ be affinely independent points in $\{x \in \R^n : ax = b\}$. Consider $b' > b$ and let $R$ be a bounded and non-empty subset of $\{x \in \R^n : ax \geq b'\}$. Then, there exists a point $x \in \bigcap_{r \in R} \conv(s^1,...,s^n,r)$ satisfying the strict inequality $ax > b$. 
\end{lemma}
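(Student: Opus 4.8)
The plan is to exhibit an explicit witness point and verify it lies in every simplex $\conv(s^1, \dots, s^n, r)$. Since $s^1, \dots, s^n$ are $n$ affinely independent points lying on the hyperplane $H := \{x : ax = b\}$, which is $(n-1)$-dimensional, they span $H$ affinely, so the base simplex $S := \conv(s^1, \dots, s^n)$ is full-dimensional within $H$ and its centroid $\bar{s} := \frac{1}{n}\sum_{i=1}^n s^i$ lies in the relative interior of $S$. For a parameter $\delta > 0$ to be fixed later, I would take the candidate point
$$x^* := \bar{s} + \frac{\delta}{\|a\|^2}\, a,$$
i.e.\ the centroid pushed a small amount in the normal direction $a$. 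Using $a\bar{s} = b$, one gets $ax^* = a\bar{s} + \frac{\delta}{\|a\|^2}\,\|a\|^2 = b + \delta > b$, so $x^*$ automatically satisfies the desired strict inequality; the entire content is to show $x^* \in \conv(s^1, \dots, s^n, r)$ for every $r \in R$ once $\delta$ is small enough.

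Next I would fix $r \in R$ and set $t_r := \delta/(ar - b)$, which is well-defined and positive since $ar \geq b' > b$. I then write $x^*$ as the convex combination $x^* = (1 - t_r)\sigma(r) + t_r\, r$, where $\sigma(r) := (x^* - t_r r)/(1 - t_r)$ (for $\delta$ small enough, $0 < t_r < 1$). Membership $x^* \in \conv(s^1, \dots, s^n, r)$ then reduces to showing $\sigma(r) \in S$. A direct computation gives $a\sigma(r) = b$, so $\sigma(r)$ lies on $H$; conceptually this is forced because $x^*$ sits at level $ax^* = b + \delta$ inside the simplex, and the cross-section of $\conv(S, r)$ at that level is exactly the shrunken-and-translated copy $(1 - t_r)S + t_r r$ of the base. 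Thus it only remains to certify that $\sigma(r)$, which we now know lies in the affine hull $H$ of $S$, actually lands inside the simplex $S$.

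The main obstacle --- and the only place the hypotheses on $R$ enter --- is obtaining this last conclusion \emph{uniformly} over the (possibly infinite, non-closed) set $R$. I would estimate $\sigma(r) - \bar{s} = \big[(x^* - \bar{s}) - t_r(r - \bar{s})\big]/(1 - t_r)$ and bound it in norm: since $x^* - \bar{s} = (\delta/\|a\|^2)a$ has norm $\delta/\|a\|$, since boundedness of $R$ yields a uniform bound $\|r - \bar{s}\| \leq M'$, and since the strict gap $b' > b$ gives $t_r \leq \delta/(b' - b)$ uniformly, one obtains $\|\sigma(r) - \bar{s}\| = O(\delta)$ with a constant independent of $r$. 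Because $\bar{s}$ lies in the relative interior of $S$, there is a radius $\rho > 0$ such that every point of $H$ within distance $\rho$ of $\bar{s}$ belongs to $S$; choosing $\delta$ small enough that the uniform $O(\delta)$ bound falls below $\rho$ then forces $\sigma(r) \in S$ simultaneously for all $r \in R$. Hence $x^* \in \bigcap_{r \in R}\conv(s^1, \dots, s^n, r)$ with $ax^* > b$, completing the argument. I expect the identity $a\sigma(r) = b$ and the uniform norm bound to be short routine computations, with the real care concentrated in extracting the uniform interior radius $\rho$ from the boundedness of $R$ together with the strict separation $b' > b$.
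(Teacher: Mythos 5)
Your argument is correct and self-contained. There is nothing in the paper to compare it against: the lemma is imported verbatim as Lemma 3.1 of \cite{dash2015relative} and the paper gives no proof of it. Your construction checks out at every step --- the identity $a\sigma(r)=b$ does follow from the choice $t_r=\delta/(ar-b)$, the strict gap $b'>b$ is exactly what makes $t_r\le \delta/(b'-b)$ uniform, the boundedness of $R$ is exactly what makes $\|\sigma(r)-\bar s\|=O(\delta)$ uniform, and the centroid of the $(n-1)$-simplex $\conv(s^1,\dots,s^n)$ (whose affine hull is all of $\{x: ax=b\}$ by affine independence) indeed admits a positive relative-interior radius $\rho$; taking $\delta$ small enough that $t_r<1$ and the $O(\delta)$ bound falls below $\rho$ finishes the proof.
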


In the statement and proof of the next theorem, we use the following notation. For any $J \subseteq [n]$, let $P^J := P \cap \{x : x_j = 0 \ \forall \ j \in J\}$. 

\begin{theorem}\label{thm:lp_lower_bound}
Let $P \subseteq [0,1]^n$ be a polytope. Assume $P_I$ is full-dimensional, and let $cx \geq \delta$ be a facet of $P_I$. Let $ t \in [n-1]$. For all $J \subset [n]$ with $|J| = n-t$, assume that $P^J$ satisfies the following properties.
\begin{enumerate}
    \item $\dim\left(\left(P^J\right)_I\right) = n - |J| = t$
    \item $\{x \in P^J : cx \geq \delta \}$ is a facet of $\left(P^J\right)_I$
    \item there is some $x^J \in P^J$ such that $cx^J < \delta$.
\end{enumerate}
Then, $L^{n-t}(P) \not = P_I$. Indeed, there is some $\bar{x} \in L^{n-t}(P)$ with $c\bar{x} < \delta$. 
\end{theorem}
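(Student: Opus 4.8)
The plan is to prove the statement by induction on the number of rounds $r := n-t$, or equivalently on the ambient dimension with $t$ held fixed. The engine is a combination of the nesting identity (Lemma~\ref{lem:nest_l}) and the height lemma (Lemma~\ref{lem:height_lemma}). To line things up with the height lemma I would apply it with $a = -c$ and $b = -\delta$, so that its conclusion $ax > b$ becomes exactly the violation $c\bar x < \delta$ that we seek, while the affinely independent points $s^1,\dots,s^n$ on $\{ax=b\}$ are taken to be the integer vertices of the facet $F = \{x \in P_I : cx = \delta\}$. Since $P_I$ is full-dimensional and $cx \geq \delta$ is a facet, there are $n$ such affinely independent integer points, and $c \neq 0$.

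Before the main induction I would record a preliminary claim: conditions 1--3 in fact hold for \emph{every} $J'$ with $|J'| \le n-t$, not merely for $|J'| = n-t$. Condition 3 is immediate, since any violating point $x^J \in P^J$ for some $J \supseteq J'$ of size $n-t$ also lies in $P^{J'}$. For conditions 1 and 2 I would argue by downward induction on $|J'|$, using that $(P^{J'\cup\{j\}})_I$ is exactly the coordinate face $(P^{J'})_I \cap \{x_j = 0\}$ of the $0/1$ polytope $(P^{J'})_I$: full-dimensionality lifts because one can always find a free coordinate $j$ on which $P^{J'}$ carries an integer point with $x_j=1$ (otherwise the integer hull would collapse to a single point while containing a positive-dimensional face), and the facet property lifts by supplementing the affinely independent facet points inherited from level $|J'|+1$ with one additional facet point having a free coordinate equal to $1$.

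With the propagation claim in hand the induction is clean. The base case is $r=0$ (ambient dimension $t$): here $L^0(P)=P$ and condition 3 with $J=\emptyset$ directly supplies a point with $cx<\delta$. For the inductive step I would peel off one coordinate at a time. For each $i \in [n]$ the restriction $P\cap\{x_i=0\}$ satisfies the hypotheses in one lower dimension (full-dimensionality and facetness by the propagation claim, conditions 1--3 by restriction), so by the inductive hypothesis there is a point $r_i \in L^{n-t-1}(P \cap \{x_i = 0\})$ with $c r_i < \delta$; by Lemma~\ref{lem:nest_l} this means $r_i \in L^{n-t-1}(P) \cap \{x_i = 0\}$. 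Taking $R = \{r_1,\dots,r_n\}$ (finite, hence bounded, with a uniform gap $b' = -\max_i c r_i > -\delta = b$), Lemma~\ref{lem:height_lemma} produces a single point $\bar x \in \bigcap_i \conv(s^1,\dots,s^n,r_i)$ with $c \bar x < \delta$. Finally, since each $s^k$ is integral with $s^k_i \in \{0,1\}$ and $s^k \in P_I \subseteq L^{n-t-1}(P)$, and since $r_i$ lies in $L^{n-t-1}(P) \cap \{x_i=0\}$, each set $\conv(s^1,\dots,s^n,r_i)$ is contained in $\conv\big((L^{n-t-1}(P)\cap\{x_i=0\}) \cup (L^{n-t-1}(P)\cap\{x_i=1\})\big)$; intersecting over all $i$ shows $\bar x \in L(L^{n-t-1}(P)) = L^{n-t}(P)$, which is exactly what is needed.

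The main obstacle I anticipate is the facet half of the propagation claim (condition 2 at intermediate levels): guaranteeing, at every restriction level, enough affinely independent integer facet points in the restricted space to feed the height lemma, and in particular producing the extra facet point with a free coordinate set to $1$ while correctly handling the low-dimensional edge cases. Everything else — the nesting rewriting, the choice of $a,b,R$, and the convex-hull containment certifying membership in $L^{n-t}(P)$ — should then be routine.
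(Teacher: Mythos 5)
Your proposal is correct and follows essentially the same route as the paper's proof: the same propagation claim that conditions 1--2 persist for all $|J'|\le n-t$ (built by adding one extra affinely independent facet point with a free coordinate set to $1$), the same use of Lemma~\ref{lem:nest_l} to place the violating points of the restrictions inside $L^{n-t-1}(P)\cap\{x_i=0\}$, and the same application of Lemma~\ref{lem:height_lemma} to merge them with the facet points into a single violating point one round up; your induction on the ambient dimension is just a repackaging of the paper's induction over the subsets $J$. The one step you flag as the main obstacle -- producing the additional facet point with $x_i=1$ -- is resolved in the paper by taking a second index $i'\notin J\cup\{i\}$ and extracting the needed point from the facet point set of $P^{J\cup\{i'\}}$, whose dimension forces some member to have $x_i=1$.
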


\begin{proof}
We will use the following claim for the proof. 

\begin{claim}\label{claim:smaller_J}
Consider any $J \subset [n]$ with $|J| \leq n-t$. Properties 1 and 2 of the theorem hold for $P^J$. 
\end{claim}
\begin{proof}[Proof of Claim \ref{claim:smaller_J}]
We proceed by induction: Let $k \geq t$ and suppose Properties 1 and 2 hold for all $J \subset [n]$ with $|J| = n-k$. Now consider any $J \subset [n]$ with $|J| = n-(k + 1)$. First, we will show that $\{x \in P^J : cx = \delta \}$ is a facet of $\left(P^J\right)_I$. In particular, we will demonstrate $k+1$ affinely independent points $S_J \subset P \cap \{0,1\}^n$ such that for any $x \in S_J$, it holds that $cx = \delta$ and $x_j = 0$ for all $j \in J$. 

Fix any $i \not \in J$ and let $J' = J \cup \{i\}$. Observe that, by the induction hypothesis, there is a set $S_{J'} \subset \{0,1\}^n$ of $k$ affinely independent points such that for any $x \in S_{J'}$, it holds that $cx = \delta$ and $x_j = 0$ for all $j \in J'$; furthermore, observe that $S_{J'} \subset P^{J'} \subset P^J$. Therefore, it suffices to show that there is a point in $\{0,1\}^n$ with $x_i = 1$ with $cx = \delta$ and $x_j = 0$ for all $j \in J$, since such a point would be affinely independent from the set $S_{J'}$, since $x_i = 0$ for any $x \in S_{J'}$.

Consider any $i' \not \in J'$ (such $i'$ exists since $k \geq 1$ implies $|J'| \leq n-1$) and let $J'' = J \cup \{i'\}$. Let $S_{J''}$ be defined similarly to $S_{J'}$ and observe that there is a point $x'' \in S_{J''}$ with $x''_i = 1$, since  $\dim\left(P^{J''}\right)_I=k$ and we already have $k$ equations $x_j =0$ for $j \in J''$. Clearly, since $x'' \in S_{J''}$ it holds that $cx'' = \delta$ and $x''_j = 0$ for all $j \in J''$. Therefore, $S_{J'} \cup \{x''\}$ is a set of $k+1$ affinely independent points in $P^J \cap \{0,1\}^n$ with $cx = \delta$, showing that $P^J$ satisfies Property 2 of the theorem. The proof showing $P^J$ satisfies Property 1 is exactly the same, but with $S_{J'}$ including an additional point in $\left(P^{J'}\right)_I$ with $cx > \delta$ (clearly this point is affinely independent from the points in the facet). 
\end{proof}

We now prove the theorem by induction. Assume for all $J \subset [n]$ such that $|J| = n - k$, there is a point $x^J \in L^{k-t}(P^J)$ with $cx^J < \delta$. The theorem assumes this is true for $k = 0$. Consider any $J \subseteq [n]$ such that $|J| = n - (k + 1)$, we will show that there is a point $x^J \in L^{(k + 1) - t}(P^J)$ with $cx^J < \delta$. It follows from the definition of $L(\cdot)$ and Lemma \ref{lem:nest_l} that,
$$L^{k-t+1}\left(P^J\right) = \bigcap_{i \in [n] \setminus J} \conv \left( L^{k-t}\left(P^{J \cup \{i\}}\right) \cup L^{k-t}(P \cap \{x : x_i = 1\}) \right).$$
By the induction hypothesis, the $i$-th term of the intersection contains a point $x^{J \cup \{i\}}$ such that $cx^{J \cup \{i\}} < \delta$ and by Claim \ref{claim:smaller_J} it contains a set $S_J$ of $k+1$ affinely independent points, each satisfying $cx = \delta$. Therefore, by Lemma \ref{lem:height_lemma}, there is a point $x^J \in \bigcap_{i \in [n] \setminus J} \conv\left(S_J \cup \left\{x^{J \cup \{i\}}\right\}\right)$ such that $cx^J < \delta$ as desired. 
\end{proof}

The second lift-and-project operator that we consider in this paper is based on sequential convexification, introduced in Section 2 of \cite{balas1993lift}. 

Let $k \in [n]$, and let $i_1, ..., i_k \in [n]$. Theorem 2.2 of \cite{balas1993lift} gives a method to find a compact extended formulation for the polytope $P_{i_1,...,i_k} = \conv(\{x \in P : x_j \in \{0,1\} \ \forall \ j \in \{i_1,...,i_k\}\})$. In this study, we will consider the following natural operator
$$B^k(P) = \bigcap_{\{i_1,...,i_k\} \subset [n]} P_{i_1,...,i_k}$$
Note that $L(P) = B^1(P)$ and $L^k(P) \subseteq B^k(P)$ for any $k \geq 1$ (this also follows from Theorem \ref{thm:bounded_height_main}).

Third, we consider the Sherali-Adams hierarchy; see Section 10.4.1 of \cite{Conforti_Cornuéjols_Zambelli_2014}. We denote by  $SA^k(P)$ the $k$th level polytope in the Sherali-Adams hierarchy. We have $P_I = SA^n(P) \subseteq  SA^{n-1}(P) \subseteq \ldots \subseteq SA(P) \subseteq P$.

% \red{An informal description was given in \cite{charikar2009integrality}: a feasible solution to the relaxation obtained after $k$ rounds satisfies the property that for any $k$ variables in the original relaxation, their values can be expressed as a projection of a convex combination of integer solutions. The reverse is certainly not true. }

\begin{remark} \label{rem:sa_ef_size}
The extended formulation for $SA^k(P)$ has $O(\binom{n}{k})$ variables and $O(\binom{n}{k})$ constraints. 
\end{remark}

\begin{remark}
It is well-known and easy to see that $SA^k(P) \subseteq L^k(P)$ for any $k \geq 1$ and polytope $P \subseteq [0,1]^n$; see Lemma 10.8 and Theorem 10.11 of \cite{Conforti_Cornuéjols_Zambelli_2014}. 
\end{remark}

% \red{\cite{kaibel2010symmetry} study the benefits of asymmetry in constructing extended formulations. In particular, they show that there are polytopes, say $P_I$, such that there is a compact extended formulation for $P_I$, but none that is \textit{coordinate-symmetric}. This is especially meaningful in their study, since they are often concerned with a $P_I$ that has some underlying symmetry, since it typically captures \textit{all} instances of a certain CSP (e.g. all matchings of $K_n$); furthermore, they are not at all concerned with applying a systematic method to an initial relaxation $P$. We note here that, if $P$ is coordinate-symmetric, then Sherali-Adams maintains that coordinate-symmetry and results in a coordinate-symmetric extended formulation, while branch-and-bound does not necessarily maintain that symmetry and has the freedom to result in an asymmetric extended formulation, which may lead to a relative advantage.

% Note Section 4 of \cite{chan2016approximate} shows that SA is as strong as all symmetric LPs for Max CSPs, but I am not sure this is the case for any specific relaxation $P$.}

\section{Skewed $k$-trees}\label{section3}

In this section, we present two problems that branch-and-bound is able to solve in polynomial time, while Sherali-Adams requires an exponential size extended formulation. This capability of branch-and-bound can be understood by considering a specific family of trees.

\begin{definition}
Let $k \in \Z_{\geq 1}$ and $\pi$ be a permutation of $[n]$. We construct a \emph{skewed k-tree} $\T^{\pi,k}$ as follows.

Let $\T_0$ consist of only the root, i.e. a single node $v$ with $C_v = \emptyset$. Then, for $i = \pi(1),...,\pi(n)$, do the following. For any leaf $v \in \leaves(\T_{i-1})$, if $|J^1_v| < k$: branch on variable $i$ at $v$. In other words, $\leaves(\T_i) = \left(\leaves(\T_{i-1}) \setminus \{v : |J^1_v| < k\}\right) \cup \left\{v^0, v^1 : C_{v^a} = C_v \cup \{x_i = a\}, a \in \{0,1\}\right\}$, and set $\T^{\pi,k} = \T_n$. 
\end{definition}

\begin{theorem}\label{lem:bb_k_subsets}
For any permutation $\pi$, the skewed k-tree $\T^{\pi,k}$ has size $n^{O(k)}$ and satisfies the following properties
\begin{enumerate}
    \item for each subset $J \subset [n]$ of $k' \leq k$ variables, there is exactly one leaf $v \in \leaves(\T^{\pi,k})$ such that $J_v^1 = J$ and
    \item for any leaf $v \in \leaves(\T^{\pi,k})$, either $|J_v^1| = k$ or $J_v^0 \cup J_v^1 = [n]$.
\end{enumerate}
\end{theorem}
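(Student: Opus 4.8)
The plan is to characterize the leaves of $\T^{\pi,k}$ completely and then read off all three claims from that characterization. Throughout I think of the construction as processing the variables in the order $\pi(1), \pi(2), \ldots, \pi(n)$, and I write $V_j = \{\pi(1), \ldots, \pi(j)\}$ for the first $j$ of them. The governing fact I would first record is the stopping rule built into the definition: at the iteration handling $\pi(j)$, a current leaf $v$ is branched precisely when $|J_v^1| < k$. Consequently a node ceases to be extended — i.e.\ becomes a final leaf — exactly when either it has accumulated $k$ ones along its root-path, or the construction has processed all $n$ variables. This immediately yields Property 2: for any leaf $v$, either $|J_v^1| = k$, or $v$ was extended at every iteration so that every variable received a decision and hence $J_v^0 \cup J_v^1 = [n]$.

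For Property 1 I would show that the map $v \mapsto J_v^1$ is a bijection from $\leaves(\T^{\pi,k})$ onto the collection of subsets of $[n]$ of size at most $k$; Property 1 is exactly the statement that this map is well-defined and one-to-one onto each target set. Since we never branch a node already holding $k$ ones, every node satisfies $|J_v^1| \le k$, so the map lands in the correct range. For surjectivity, given $J$ with $|J| = k' \le k$ I would trace the root-to-leaf path that sets $x_{\pi(j)} = 1$ when $\pi(j) \in J$ and $x_{\pi(j)} = 0$ otherwise, stopping as soon as the $k$-th one is placed: when $k' < k$ the count of ones never reaches $k$, so all variables are decided and the path ends at a leaf with $J_v^1 = J$ and $J_v^0 = [n] \setminus J$; when $k' = k$ the path stops exactly at position $m = \max\{j : \pi(j) \in J\}$, again producing a leaf with $J_v^1 = J$.

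Injectivity is the step that needs the most care, and I regard it as the main (mild) obstacle: I must show that $J_v^1$ forces the entire constraint set $C_v$, i.e.\ that the zeros are determined. When $|J_v^1| < k$, Property 2 forces $J_v^0 = [n] \setminus J_v^1$, so $C_v$ is pinned down. When $|J_v^1| = k$, the key point is that the branching order is fixed by $\pi$ and the path must halt at the first moment $k$ ones appear; hence the zeros are exactly $\{\pi(j) : j < m,\ \pi(j) \notin J_v^1\}$ with $m$ as above, and along the way each such position really is branched because the running count of ones there is below $k$. Thus two leaves sharing the same $J_v^1$ share the same $C_v$ and coincide, which establishes the bijection and therefore Property 1.

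Finally, the size bound follows by counting through the bijection: $|\leaves(\T^{\pi,k})| = \sum_{i=0}^{k} \binom{n}{i}$, which is $O(n^k)$ and in any case $n^{O(k)}$. Since a branch-and-bound tree is binary, its total number of nodes is $2\,|\leaves(\T^{\pi,k})| - 1$, so $|\T^{\pi,k}| = n^{O(k)}$ as claimed. If one prefers to avoid invoking the bijection here, the same bound follows from a short induction showing that after $j$ iterations the number of leaves is at most $\sum_{i=0}^{k}\binom{j}{i}$.
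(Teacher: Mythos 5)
Your proposal is correct and follows essentially the same route as the paper: Property 2 from the stopping rule, Property 1 by explicitly tracing the root-to-leaf path for a given $J$ together with injectivity of $v \mapsto J_v^1$, and the size bound by counting the at most $\sum_{i=0}^{k}\binom{n}{i}$ leaves. The only cosmetic difference is in the injectivity step, where the paper observes that two distinct leaves must disagree on the variable branched at their deepest common ancestor, while you instead reconstruct all of $C_v$ from $J_v^1$; both arguments are valid and of comparable length.
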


\begin{proof}
We begin by showing $|\T^{\pi,k}| = n^{O(k)}$. Let $v^1,v^2 \in \leaves(\T)$. Observe that $J^1_{v^1} \not = J^1_{v^2}$, in particular $|J^1_{v^1} \triangle J^1_{v^2}| \geq 1$ (where $A \triangle B$ is the symmetric difference of the sets $A,B$): let $w$ be the most recent common ancestor (that of maximum height) of $v^1,v^2$, and let $\ell$ be the variable branched on at $w$ (so $w$ has height $\ell - 1$); then, clearly $\ell \in J^1_{v^1} \triangle J^1_{v^2}$. Furthermore, by construction, $|J_v^1| \leq k$ for any $v \in \leaves(\T^{\pi,k})$. Therefore, for each $v \in \leaves(\T^{\pi,k})$, there is a distinct size $\leq k$ subset of $[n]$, and there are only $\sum_{i \in [k]} \binom{n}{i} = n^{O(k)}$ such subsets, proving the desired bound on $|\T^{\pi,k}|$. 

First, we will show that $\T^{\pi,k}$ satisfies property 2 of the lemma. Since, $|J_v^1| \leq k$ for any $v \in \leaves(\T^{\pi,k})$, we only need to show that, for any $v \in \leaves(\T^{\pi,k})$ such that $|J_v^1| < k$, it must be that $J_v^0 \cup J_v^1 = [n]$. Let $v$ be such a leaf and, for sake of contradiction, let $j' = \min\{j \not \in J_v^0 \cup J_v^1\}$ (note that $j' \leq h(v) + 1$, since $|[n] \setminus (J_v^0 \cup J_v^1)| = n - h(v)$) and let $w$ be the ancestor of $v$ at height $j'-1$ (note that it is possible $w = v$, in the case $j' = h(v) + 1$). Then, $w \in \leaves(\T_{j'-1})$ and $|J^1_w| \leq |J^1_v| < k$, therefore, $j'$ is branched on at $w$ and therefore $j' \in J_v^0 \cup J_v^1$, achieving the desired contradiction. 

Finally, we will show that $\T^{\pi,k}$ satisfies property 1 of the lemma. Let $J \subset [n]$ of size $k' \leq k$. We will show that there exists a leaf $v \in \leaves(\T^{\pi,k})$ such that $J^1_v = J$, which will prove the desired result, since we have already argued that $J^1_{v^1} \not = J^1_{v^2}$ for distinct leaves $v^1,v^2$. Let $j' = \max\{j \in J\}$ and let $\bar{v}$ be a node be such that $C_{\bar{v}} = \left(\bigcup_{j \in [j'] \setminus J} \{x_j = 0\} \right) \cup \left( \bigcup_{j \in J} \{x_j = 1\} \right)$. Clearly by construction $\bar{v} \in T_{j'}$ since its parent $\bar{\bar{v}}$ fixes exactly variables $1,...,j'-1$ and has $|J_{\bar{\bar{v}}}^1| < k$. If $k' = k$, then $\bar{v} \in \leaves(\T^{\pi,k})$ and $J_v^1 = J$. If $k' < k$, then there is a leaf $v \in \leaves(\T^{\pi,k})$ with $C_v = C_{\bar{v}} \cup \left( \bigcup_{j' + 1 \leq j \leq n} \{x_j = 0\} \right)$. We can see that $v \in \T^{\pi,k}$ since any node $\bar{\bar{v}}$ that is an ancestor of $v$ will have $|J_{\bar{\bar{v}}}^1| < k$ and therefore will be branched on. 
    \end{proof}

\begin{remark}
Let $\mathcal{F} = \{\{x : x_{i_1} = ... = x_{i_k} = 1\}$ : $\{i_1,...,i_k\} \subset [n]\}$ be the set of all faces of $[0,1]^n$ defined by fixing $k$ variables to $1$. Suppose for any $F \in \mathcal{F}$, it holds that $P \cap F = \emptyset$ or $P \cap F = (P \cap F)_I$. Let $\T$ be \textbf{any} branch-and-bound tree that branches on a node if and only if the node is not empty nor integral, through a similar argument to that of Theorem \ref{lem:bb_k_subsets}, it holds that $\T(P) = P_I$ and $|\T| = n^{O(k)}$. 
\end{remark}

\begin{remark}
For any permutation $\pi$, the sequence of skewed $k$-trees $\{\mathcal{T}^{\pi,k} : k = 1,...,n\}$ corresponds to a hierarchy of relaxations. Let $P \subseteq [0,1]^n$, then observe $\T^{\pi, k'}(P) \subset \T^{\pi, k}(P)$ for any $k < k'$ and $\T^{\pi, n}(P) = P_I$. 
\end{remark}

At first glance, this seems similar to the $k$-th level of Sherali-Adams, which aims to take advantage of ``easy'' subsets of $k$ variables. However, in the following subsections, we will see classes of instances for which the above property allows branch-and-bound to succeed, while Sherali-Adams is unable to do so. Intuitively, this is because the $k$-th level of Sherali-Adams may not able to take advantage of inequalities that are valid for $P \cap \{x : C_v\}$ when $v \in \leaves(\T)$ has height $> k$, where as in the case of $\T^{\pi,k}$, it is ensured that either $k$ variables are fixed to one or all variables are fixed. 

% \red{Suppose for example $\cx \leq \delta$ is valid for all $P \cap F$, where $F$ is any $n-k$ dimensional face of $[0,1]^n$, fixing $k$ variables to $1$; moreover, suppose this inequality is not valid for $P \cap F$ where $F$ is any face of $[0,1]^n$ that fixes less than $k$ variables to $1$ and $o(n)$ variables to $0$. Is it true that the inequality is not valid for $L^k(P)$ (or $SA^k(P)$)? What are sufficient conditions for the inequality to not be valid?}

% Furthermore, one may be tempted to argue that one can simply enumerate all size $k$ subsets of $[n]$; however, Section \ref{sec:clique} exhibits that it may not be clear to the user, a priori, that subsets of size $k$ are significant to the underlying problem. In this case, branch-and-bound is of particular use, since it is able to generate such valid inequalities systematically.

\subsection{Uniform Knapsack with Small Capacity}

In this section, we apply Theorem \ref{lem:bb_k_subsets} to the problem of a uniform knapsack with small capacity.

% \red{Can we generalize the results of this section? Is there anything interesting related to lifts of cover inequalities? Or cardinality constrained IPs?}

\begin{theorem}\label{thm:knapsack}
Let $P \subset [0,1]^n$ be such that $\sum_{i \in [n]} x_i \leq k - \epsilon$ is valid for $P$, for any $\epsilon \in (0,1)$. Let $\T^{\pi,k}$ be a skewed k-tree, then $\T^{\pi,k}(P) = P_I$. Furthermore, there exists a $P$ such that $\sum_{i \in [n]} x_i \leq 2 - \frac{2}{q}$ is valid for $P$, yet $SA^{\lfloor n(q-2)/(q-1) \rfloor}(P) \not = P_I$, where $q$ can be any integer at least $3$. 
\end{theorem}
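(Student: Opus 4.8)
The plan is to treat the two assertions separately, handling the branch-and-bound equality first and the Sherali--Adams lower bound second.

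\textbf{The branch-and-bound equality.} To show $\T^{\pi,k}(P)=P_I$, I would first note that since $\sum_i x_i \le k-\epsilon$ is valid for $P$ with $\epsilon\in(0,1)$, every integer point of $P$ has at most $k-1$ coordinates equal to $1$. Then I would examine the leaves of $\T^{\pi,k}$ through the two structural properties of Theorem~\ref{lem:bb_k_subsets}. If a leaf $v$ has $|J_v^1|=k$, then every $x\in P_v$ satisfies $\sum_i x_i \ge \sum_{j\in J_v^1}x_j = k > k-\epsilon$, so $P_v=\emptyset$. Otherwise property~2 forces $J_v^0\cup J_v^1=[n]$, so $v$ fixes all coordinates and $P_v$ is either empty or the single $0/1$ point it specifies; hence each nonempty atom is an integer point of $P$. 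Conversely, given any $z\in P\cap\{0,1\}^n$ with support $J$, we have $|J|\le k-1<k$, so by property~1 there is a leaf $v$ with $J_v^1=J$, and by property~2 that leaf fixes all coordinates; the point it specifies is exactly $z$, which lies in $P$, so $z\in P_v$. Thus $\bigcup_{v\in\leaves(\T^{\pi,k})}P_v = P\cap\{0,1\}^n$, and $\T^{\pi,k}(P)=\conv(P\cap\{0,1\}^n)=P_I$.

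\textbf{The Sherali--Adams lower bound.} For the second assertion I would take $P$ to be the uniform knapsack $P=\{x\in[0,1]^n:\sum_i x_i\le c\}$ with $c:=2-2/q$; then $\sum_i x_i\le c$ is valid by construction, and since $1<c<2$ the integer hull is $P_I=\conv(\{0,e_1,\dots,e_n\})$, whose facet $\sum_i x_i\le 1$ is the target inequality. Writing $m:=\lfloor n(q-2)/(q-1)\rfloor$, I would exhibit the explicit symmetric point $\bar x=\bar\beta\,\ones$ with $\bar\beta:=\frac{c}{\,n+m(c-1)\,}$ and prove $\bar x\in SA^m(P)$. Because $P$ and $SA^m(P)$ are invariant under coordinate permutations, it suffices to produce a symmetric (exchangeable) certificate, which I would take to be the pseudo-distribution ``at most one coordinate equals $1$, each with probability $\bar\beta$'', i.e. the moment sequence $y_0=1$, $y_1=\bar\beta$, and $y_j=0$ for $j\ge2$.

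The verification then reduces to a short symmetric calculation. Setting $P_{u,w}:=\sum_{j=0}^{w}(-1)^j\binom{w}{j}y_{u+j}$, one gets $P_{0,w}=1-w\bar\beta$, $P_{1,w}=\bar\beta$, and $P_{u,w}=0$ for $u\ge2$. The box products give the nonnegativity constraints $P_{u,w}\ge0$ (so in particular $\bar\beta\le 1/(m+1)$), while the knapsack products reduce to $(c-u)P_{u,w}\ge(n-u-w)P_{u+1,w}$ for $u+w\le m$. The cases $u\ge1$ hold trivially once $y_{\ge2}=0$ (using $c>1$), and the case $u=0$ becomes $c(1-w\bar\beta)\ge(n-w)\bar\beta$, i.e. $\bar\beta\le c/(n+w(c-1))$, which is tightest at $w=m$ and holds with equality by the choice of $\bar\beta$. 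Checking $m\le n-c$ (valid at this $m$ for all large $n$) confirms that the knapsack bound, not the box bound, is binding, so that $\bar\beta\le 1/(m+1)$ also holds; hence $\bar x\in SA^m(P)$. Finally $\sum_i\bar x_i=\frac{nc}{\,n+m(c-1)\,}$, and substituting $c=2-2/q$ and $m=\lfloor n(q-2)/(q-1)\rfloor$ gives $\sum_i\bar x_i>1$ for every integer $q\ge3$, so $\bar x\notin P_I$ and $SA^m(P)\ne P_I$.

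\textbf{Main obstacle.} The algebra above is routine; the delicate points are (i) justifying that a \emph{symmetric} moment certificate suffices --- which follows from permutation-invariance by averaging any certificate over the symmetric group --- and (ii) verifying that $\{P_{u,w}\ge0\}$ together with $\{(c-u)P_{u,w}\ge(n-u-w)P_{u+1,w}:u+w\le m\}$ really are the symmetrizations of \emph{all} the level-$m$ Sherali--Adams constraints, i.e. that every product of a defining inequality of $P$ with a monomial $\prod_{i\in U}x_i\prod_{j\in W}(1-x_j)$, $|U|+|W|\le m$, linearizes to one of these. This bookkeeping, together with matching the stated level $\lfloor n(q-2)/(q-1)\rfloor$ to the regime $m\le n-c$ in which the knapsack inequality governs the certificate, is where I would concentrate the care.
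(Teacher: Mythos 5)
Your first part is the same argument as the paper's: leaves with $|J_v^1|=k$ have empty atoms because they force $\sum_i x_i = k > k-\epsilon$, and all remaining leaves fix every coordinate, so each nonempty atom is an integer point of $P$; your extra ``converse'' step (every integer point of $P$ is captured by some leaf, via property 1 of Theorem~\ref{lem:bb_k_subsets}) is something the paper leaves implicit, since $P_I \subseteq \T(P)$ holds for any BB tree.

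For the Sherali--Adams lower bound you take a genuinely different route. The paper chooses the same polytope $P=\{x\in[0,1]^n:\sum_i x_i\le 2-2/q\}$ but then simply cites Theorem 5 of Karlin--Mathieu--Nguyen for membership of the symmetric point $\ones\cdot\frac{2(q-1)/q}{n+(t-1)(q-1)/q}$ in $SA^t(P)$, whereas you construct a self-contained certificate. Your symmetrization reduction and your list of symmetrized constraints are correct, and the degenerate moment sequence $y_0=1$, $y_1=\bar\beta$, $y_{\ge 2}=0$ does satisfy them with $\bar\beta=\min\{1/(m+1),\,c/(n+m(c-1))\}$. What this buys is a short, verifiable proof with no external citation. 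What it costs is quantitative strength: because you kill all second moments, the box constraints force $\bar\beta\le 1/(m+1)$, so $\sum_i\bar x_i>1$ only when $m\le n-2$. One checks that $\lfloor n(q-2)/(q-1)\rfloor\le n-2$ exactly when $n\ge 2q-2$; for $n<2q-2$ the stated level equals $n-1$ and your certificate only reaches $\sum_i\bar x_i=1$, so it does not separate. This is precisely the ``delicate point'' you flagged, and it is a real (if narrow) shortfall relative to the theorem as stated: the cited KMN point carries nonzero higher moments, which relax the box products and let it violate $\sum_i x_i\le 1$ at level $n-1$ as well. To close the gap you would either restrict to $n\ge 2q-2$ (which preserves all the asymptotic content the paper uses) or enrich the certificate with $y_2>0$ along the lines of the KMN construction.
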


\begin{proof}
We begin by proving the positive result on branch-and-bound. In particular, we show that for every leaf $v \in \leaves(\T^{\pi,k})$, either its atom is empty (i.e. $P \cap \{x : C_v\} = \emptyset$) or integral (i.e. $P \cap \{x : C_v\}$ has only integral extreme points). Let $v \in \leaves(\T^{\pi,k})$ be such that $|J_v^1| = k$. Then, for any $x \in \{x : C_v\}$, it holds that $\sum_{i \in [n]} x_j = \sum_{j \in J_v^1} x_j = k > k - \epsilon$, which implies $x \not \in P$, and therefore $P \cap \{x : C_v\} = \emptyset$. Clearly, for any $v \in \leaves(\T^{\pi,k})$ such that $J_v^0 \cup J_v^1 = [n]$, it holds that $P \cap \{x : C_v\}$ is either empty or a single point in $\{0,1\}^n$. This concludes the proof of $\T^{\pi,k}(P) = P_I$. 

Now, we prove the lower bound on Sherali-Adams. Let $q$ be any integer at least $3$ and let $P = \{x \in [0,1]^n : \sum_{j \in [n]}qx_j \leq 2(q-1)\}$. Note that $\sum_{i \in [n]} x_i \leq 2 - \frac{2}{q}$ is clearly valid for $P$, and in particular we can write $P = \{x \in [0,1]^n : \sum_{i \in [n]}x_i \leq 2(\frac{q-1}{q})\}$. Theorem 5 of \cite{karlin2011integrality} shows that the point $\ones \cdot \frac{2(q-1)/q}{n + (t-1)(q-1)/q} \in SA^t(P)$. Therefore, when $t < n\frac{q-2}{q-1} + \frac{q}{q-1}$, it holds that $\sum_{i \in [n]} x_i \leq 1$ is not valid for $SA^t(P)$, proving the desired result. 
\end{proof}

% \begin{remark}
% Let $P$ be as in Theorem \ref{thm:knapsack} and $c = \ones$. Then Theorem \ref{thm:knapsack} implies that there is a budget $N,M \leq \poly(n)$ such that Equation (\ref{eq:main_relationship}) holds with $\gamma_1 \geq 2 - c$ for any constant $c \in (0,1)$ and with the second inequality tight with $\gamma_2 = 1$. 
% \end{remark}

\subsection{Max Clique on Sparse Graphs}\label{sec:clique}

One measure of graph density that has shown to be relevant in practical applications is the notion of \textit{degeneracy}.

\begin{definition}[\cite{lick1970k}]
A graph $G$ is $d$-degenerate if each of its subgraphs $G' \subseteq G$ has minimum degree at most $d$. The \textit{degeneracy} of $G$ is the smallest $d$ such that $G$ is $d$-degenerate. Moreover, it is a simple fact that the size $k$ of the maximum clique in $G$ satisfies $k \leq d+1$. 
\end{definition}

Observe that the degeneracy of a forest is $1$, of a cycle is $2$, of a planar graph is at most $5$, and of a $K_n$ is $n-1$; this demonstrates how degeneracy can be a reasonable measure of density. The parameterization by degeneracy is inspired by \cite{walteros2020maximum,naderi2022worst}. In particular, Table 2 of \cite{walteros2020maximum} shows that, for many real-world graphs, the degeneracy is several orders of magnitude smaller than the number of nodes, indicating that further study of algorithms on such graphs is worthwhile. We show that branch-and-bound is more effective for finding cliques on such graphs than Sherali-Adams. 

\begin{theorem}\label{thm:clique}
Let $G = (V,E)$ be a graph on $n$ vertices with maximum clique size $k$ and degeneracy $d$. Let $P = \{x \in \R^n_+ : x_u + x_v \leq 1 \ \forall \ uv \not \in E\}$ be the fractional clique polytope for $G$. Let $\T^{\pi,k}$ be a skewed k-tree, then $\T^{\pi,k}(P) = P_I$, while $SA^{\lfloor n/(d + 1) - 3 \rfloor}(P) \not = P_I$. 
\end{theorem}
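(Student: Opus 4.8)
The plan is to prove the two assertions separately. The branch-and-bound bound $\T^{\pi,k}(P)=P_I$ holds for every graph $G$, while the Sherali--Adams lower bound is witnessed by a specific $d$-degenerate construction, exactly as in Theorem~\ref{thm:knapsack} (a universal positive result paired with an existential negative one). For the positive part I would mirror the proof of Theorem~\ref{thm:knapsack}: by property~2 of Theorem~\ref{lem:bb_k_subsets} every leaf $v\in\leaves(\T^{\pi,k})$ satisfies $|J_v^1|=k$ or $J_v^0\cup J_v^1=[n]$, and I would show each atom $P_v$ is empty or a single $0/1$ point. If $J_v^0\cup J_v^1=[n]$ all coordinates are fixed, so $P_v$ is empty or integral. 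If $|J_v^1|=k$, then either $J_v^1$ is not a clique of $G$, so a non-edge $uv$ with $u,v\in J_v^1$ makes $x_u+x_v\le 1$ infeasible and $P_v=\emptyset$; or $J_v^1$ is a (necessarily maximum) $k$-clique, in which case every $u\notin J_v^1$ is non-adjacent to some $j\in J_v^1$ (otherwise $J_v^1\cup\{u\}$ would be a $(k{+}1)$-clique), so $x_u+x_j\le 1$ forces $x_u=0$ and $P_v$ is the single incidence vector of $J_v^1$. Hence $\T^{\pi,k}(P)=\conv\left(\bigcup_v P_v\right)\subseteq P_I$, and the reverse inclusion is automatic.

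For the lower bound I would take $G$ to be the disjoint union of $m:=\lfloor n/(d+1)\rfloor$ cliques $K_{d+1}$ (absorbing the fewer than $d+1$ leftover vertices into one extra smaller clique, which changes neither the degeneracy $d$ nor the maximum clique size $k=d+1$). Writing each vertex as a pair $(i,a)$ with $i\in[m]$ its component, consider the point $x^*$ with $x^*_{(i,a)}=\tfrac{1}{m-1}$. Since every clique of $G$ is contained in a single component, $\sum_v x_v\le d+1$ is valid for $P_I$, whereas $\sum_v x^*_v=\tfrac{n}{m-1}>d+1$, so $x^*\notin P_I$. It then remains to certify $x^*\in SA^{m-3}(P)$, which is the crux.

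For this certification I would contract each component to a super-vertex, reducing to the stable-set polytope of $K_m$, for which $\tfrac{1}{m-1}\ones_m=\tfrac{1}{(m-3)+2}\ones_m$ lies in $SA^{m-3}$ by the fact cited in Example~1 (Laurent, Section~6.1 of \cite{laurent2003comparison}). Concretely, for every $U\subseteq V$ with $|U|\le m-2$, let $S\subseteq[m]$ be the set of components meeting $U$ and build a local distribution $\mu_U$ over cliques contained in $U$: choose at most one ``active'' component of $S$, each with probability $\tfrac{1}{m-1}$ (and the empty choice with the remaining probability, which is nonnegative precisely because $|S|\le|U|\le m-2$), then set to $1$ all vertices of $U$ in the active component and $0$ elsewhere. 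Each support is a clique, the singleton marginals equal $x^*$, and the families are consistent under restriction; by the standard sufficient condition for Sherali--Adams membership (a consistent family of local distributions supported on feasible integral points on every $(t{+}1)$-subset), this gives $x^*\in SA^{m-3}(P)$.

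The main obstacle is exactly this last step: verifying that the blown-up distributions are mutually consistent with the correct singleton marginals, i.e.\ that collapsing all $d+1$ copies in a component onto one super-vertex neither introduces infeasible multi-component cliques into the supports nor disturbs consistency under restriction. The only other point needing care is the floor bookkeeping when $d+1\nmid n$: the leftover clique forces $m+1$ components, for which the analogous argument places a point in $SA^{m-2}(P)\subseteq SA^{m-3}(P)$, still with $\sum_v x^*_v=\tfrac{n}{m}>d+1$, so the stated level $\lfloor n/(d+1)-3\rfloor=m-3$ is delivered.
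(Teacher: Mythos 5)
Your positive part is correct and is essentially the paper's own argument: by property~2 of Theorem~\ref{lem:bb_k_subsets}, every leaf either fixes all variables or has $|J_v^1|=k$, and in the latter case the atom is empty (a non-edge inside $J_v^1$ violates $x_u+x_v\le 1$) or is the single incidence vector of a maximum clique (every outside vertex has a non-neighbor in $J_v^1$ forcing it to $0$), so $\T^{\pi,k}(P)=P_I$.

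The negative part has a genuine gap: you have misread the quantifier. The theorem asserts $SA^{\lfloor n/(d+1)-3\rfloor}(P)\neq P_I$ for the \emph{same, arbitrary} graph $G$ of degeneracy $d$ fixed at the start of the statement; it is not an existential claim over $d$-degenerate graphs (contrast with Theorem~\ref{thm:knapsack}, whose wording ``there exists a $P$'' is explicitly existential). Your disjoint-union-of-cliques construction therefore proves a strictly weaker statement, and only in the special case $k=d+1$. The paper's proof handles arbitrary $G$: the degeneracy ordering (each vertex has at most $d$ neighbors to its right) yields, by a greedy argument, a stable set $S$ of $G$ with $|S|\ge n/(d+1)$; since the vertices of $S$ are pairwise non-adjacent, $\sum_{v\in S}x_v\le 1$ is valid for $P_I$; and the cited fact from Section~6.1 of \cite{laurent2003comparison} that $\frac{1}{t+2}\ones$ lies in $SA^t$ of the fractional stable set polytope of $K_n$ (hence in $SA^t(P)$, since $P$ has only a subset of those constraints and $SA$ is monotone) produces a point violating that inequality at level $t=n/(d+1)-3$. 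Separately, even within your weaker existential framing, the step you yourself flag as ``the crux'' --- certifying $x^*\in SA^{m-3}(P)$ via consistent local distributions --- is left unverified; it is also unnecessary, because $x^*=\frac{1}{(m-3)+2}\ones$ is placed in $SA^{m-3}(P)$ directly by the same cited result and monotonicity, with no need for the contraction-and-blow-up construction.
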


\begin{proof}
We begin by showing $\T^{\pi,k}(P) = P_I$. In particular, we will show that for any leaf $z \in \leaves(\T^{\pi,k})$, its atom is either empty (i.e. $P \cap \{x : C_z\} = \emptyset$) or integral (i.e. $P \cap \{x : C_z\}$ has only integral extreme points). Consider any $z \in \leaves(\T^{\pi,k})$ with $|J_z^1| = k$. Suppose $J_z^1$ is not a clique. Then, there is some $u,v \in J_z^1$ such that $uv \not \in E$. However, $\{x_u = 1\}, \{x_v = 1\} \in C_z$, and therefore $P \cap \{x : C_z\} = \emptyset$ since $\{x : C_z\}$ violates $x_u + x_v \leq 1$. Now suppose $J_z^1$ is a clique, then it must be a maximum (and therefore maximal) clique. Consider any $u \not \in J_z^1$, there is some $v \in J_z^1$ such that $uv \not \in E$, and therefore, since $x_u + x_v \leq 1$ for any $x \in P$, it holds that $x_u = 0$ for any $x \in P \cap \{x : C_z\}$. This implies that $P \cap \{x : C_z\}$ is integral (containing only the point $x$ such that $x_j = 1$ for $j \in J_z^1$ and $x_j = 0$ otherwise). We have shown that any leaf $z \in \leaves(\T^{\pi,k})$ with $|J_z^1| = k$ is either empty or integral, and clearly any leaf $z \in \leaves(\T^{\pi,k})$ with $J_v^0 \cup J_v^1 = [n]$ is either empty or integral, implying the desired result. 

For the proof of the lower bound for Sherali-Adams, we will need the following observation of \cite{lick1970k}: a graph with degeneracy $d$ admits a vertex ordering $(v_1, v_2, ..., v_n)$ in which each vertex $v_i$ has at most $d$ neighbors to its right (i.e. $|N(v_i) \cap \{v_{i+1},...,v_n\}| \leq d$, where $N(v)$ is the open-neighborhood of $v$). Let $(v_1, ..., v_n)$ be such an ordering, and let $N^r(v_i) = N(v_i) \cap \{v_{i+1},...,v_n\}$ be the right-neighborhood of $v_i$.

We will now show that $SA^{\frac{n}{d + 1} - 3}(P) \not = P_I$. In particular, observe that $G$ has an stable set of size at least $\frac{n}{d + 1}$. We can construct such an stable set, call it $S$, greedily: start with $S = \{v_1\}$, then choose the minimum index variable $v_j$ such that $v_j \not \in \bigcup_{v \in S} N^r(v)$ and add it so that $S = S \cup \{v_j\}$. Observe that after choosing $k$ variables, at most $kd + k$ variables are ruled out (i.e. $|S \cup \bigcup_{v \in S} N^r(v)| \leq k + kd$). Therefore, for $k < \frac{n}{d+1}$, there is still a variable that can be added to $S$. Now, let $S$ be a maximum stable set $G$ of size at least $\frac{n}{d+1}$. Section 6.1 of \cite{laurent2003comparison} shows that $ \frac{1}{t+2}\ones \in SA^t(P)$, and therefore $\frac{1}{n/(d+1) - 1} \ones \in SA^{\frac{n}{d + 1} - 3}(P)$. Therefore, the inequality $\sum_{v \in S} x_v \leq 1$ is not valid for $SA^{\frac{n}{d + 1} - 3}(P)$, while it is clearly valid for $P_I$. 
\end{proof}

% \begin{remark}
% Let $P$ be as in Theorem \ref{thm:clique} and $c = \ones_S$, where $S$ is a stable set of $G$ of size at least $\frac{n}{d + 1}$ as constructed in the proof of Theorem \ref{thm:clique}. Then Theorem \ref{thm:clique} implies that there is a budget $N,M \leq \poly(n)$ such that Equation (\ref{eq:main_relationship}) holds with $\gamma_1 \geq \frac{n}{c(d+1)}$ for some constant $c$ and with the second inequality tight with $\gamma_2 = 1$. 
% \end{remark}

\section{Polynomially Many No-Good Constraints}\label{section4}

In this section, we give another family of instances where the branch-and-bound tree is of polynomial size, but the Sherali-Adams relaxation is not. 

\begin{theorem}\label{thm:no_good}
Let $P$ be defined by a set of no-good constraints: let $S \subseteq \{0,1\}^n$ and $P = \{x \in [0,1]^n : \sum_{j : s_j = 0}x_j + \sum_{j : s_j = 1} (1 - x_j) \geq \frac{1}{2} \ \forall \ s \in S \}$. There is a branch-and-bound tree $\T$ of size at most $3n|S|$ such that $\T(P) = P_I$. Furthermore, $SA^{n-1}(P) \not = P_I$ when $|S| = 1$ and $L^{n-1}(P) \not = P_I$ if there exists $z \in S$ such that $\{z' : \|z' - z\|_1 = 1\} \cap S = \emptyset$. 
\end{theorem}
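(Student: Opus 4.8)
The plan is to prove the three assertions separately. It helps to note first that a $0/1$ point violates the constraint for $s \in S$ exactly when it equals $s$, so the feasible integer points are precisely $\{0,1\}^n \setminus S$ and $P_I = \conv(\{0,1\}^n\setminus S)$. Moreover, the affine involution complementing the coordinates on which a chosen point equals $1$ preserves the no-good structure and commutes with the disjunctions defining $L$ and $SA$; so in the last two parts I may assume the distinguished point is $\bm 0$, in which case $P=\{x\in[0,1]^n : \sum_j x_j \ge \tfrac12\}$ and $\sum_j x_j \ge 1$ is valid for $P_I$.

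\emph{The branch-and-bound tree.} I would build $\T$ recursively on a subcube $Q$ (a face obtained by fixing some coordinates) carrying the set $S_Q$ of no-good points it contains. If $S_Q=\emptyset$, make $Q$ an integral leaf: every no-good constraint is automatically satisfied on a subcube missing its forbidden vertex, so $P\cap Q=Q$. Otherwise pick $s\in S_Q$ and form a \emph{spine} branching on the free variables of $Q$ one at a time, where at each node the child $x_\ell=1-s_\ell$ leaves the spine (a new subcube) and the child $x_\ell=s_\ell$ continues it; the terminal spine node fixes $x=s$ and is an empty leaf. The key observation is that every other point of $S_Q$ lands in exactly one off-spine subcube—the one indexed by the first coordinate on which it disagrees with $s$—so the remaining no-good points are \emph{partitioned} among the off-spine subcubes, and I recurse into each. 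Hence each $s\in S$ serves as a spine exactly once; a spine in a subcube with $t\le n$ free variables has at most $n$ internal nodes, so $\T$ has at most $n|S|$ internal nodes and thus $|\T|\le 2n|S|+1\le 3n|S|$. Finally every leaf is empty or an integral face of feasible vertices, and each feasible integer point reaches a non-empty (hence integral) leaf, so $\T(P)=P_I$.

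\emph{Sherali--Adams lower bound, $|S|=1$.} I would exhibit $\tfrac1{n+1}\ones$, which violates $\sum_j x_j\ge 1$, as a member of $SA^{n-1}(P)$ via an explicit certificate: take the genuine probability distribution $\mu$ on $\{0,1\}^n$ placing mass $\tfrac1{n+1}$ on $\bm 0$ and on each $e_i$, and set $y_I=\Pr_\mu[x_i=1\ \forall i\in I]$, so $y_\emptyset=1$, $y_{\{i\}}=\tfrac1{n+1}$, and $y_I=0$ for $|I|\ge 2$. Because $\mu$ is an actual distribution, all moment-nonnegativity and consistency constraints hold automatically; the only check is that the no-good constraint multiplied by each monomial $\prod_{i\in A}x_i\prod_{j\in B}(1-x_j)$ of degree $\le n-1$ has nonnegative expectation. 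A direct computation gives $\E_\mu[(\sum_j x_j-\tfrac12)\prod_{j\in B}(1-x_j)]=\tfrac{n-|B|-1}{2(n+1)}$ for $A=\emptyset$ (and a nonnegative value when $|A|\ge 1$), which is $\ge 0$ exactly for $|B|\le n-1$ and negative at $|B|=n$; so the certificate passes level $n-1$ but fails level $n$, matching $SA^n(P)=P_I$. The conceptual point that makes a genuine distribution suffice is that $\mu$ charges mass to the \emph{infeasible} vertex $\bm 0$.

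\emph{Lift-and-project lower bound.} Assuming $\bm 0\in S$ has no Hamming-neighbor in $S$, each $e_i\notin S$ is feasible, and I would prove by induction on $n$ that $\tfrac1{n+1}\ones\in L^{n-1}(P)$. For the step, fix a coordinate $i$; by Lemma~\ref{lem:nest_l}, $L^{n-2}(P)\cap\{x_i=a\}=L^{n-2}(P\cap\{x_i=a\})$. The face $P\cap\{x_i=0\}$ is again an $(n-1)$-variable no-good polytope in which $\bm 0$ still has no neighbor, so by induction it contains $\tfrac1n\ones$ at level $n-2$; and $e_i\in P_I\subseteq L^{n-2}(P\cap\{x_i=1\})$. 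Writing $\tfrac1{n+1}\ones=\tfrac{n}{n+1}u+\tfrac1{n+1}e_i$ with $u=\tfrac1n\ones$ on the face $\{x_i=0\}$ (verified coordinatewise) realizes the point across the $i$-disjunction for every $i$, placing it in $L^{n-1}(P)$. The threshold obeys $\alpha\mapsto\alpha/(1+\alpha)$, i.e.\ $1/\alpha\mapsto 1/\alpha+1$, carrying the base value $\tfrac12$ to $\tfrac1{n+1}$ after $n-1$ steps; since $\sum_j\tfrac1{n+1}=\tfrac{n}{n+1}<1$, the point avoids $P_I$ and $L^{n-1}(P)\ne P_I$.

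\emph{Main obstacle.} I expect the tree construction to be routine once the partition observation is in hand, and the real work to lie in the two relaxation bounds: for Sherali--Adams, guessing the right certificate—in particular recognizing that one should charge mass to the forbidden vertex $\bm 0$, so that an honest distribution (failing feasibility only globally) certifies membership up to level $n-1$; and for lift-and-project, correctly propagating the ``no neighbor in $S$'' hypothesis through the induction (it is exactly what keeps each $e_i$ feasible, which is what makes the face $\{x_i=1\}$ usable in the disjunction) while simultaneously decrementing both the dimension and the level.
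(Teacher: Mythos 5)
Your proof is correct, and while the branch-and-bound part follows essentially the same plan as the paper (a tree in which the only fully-refined leaves are the points of $S$, every other leaf being a face of $[0,1]^n$ disjoint from $S$ and hence entirely contained in $P$, giving at most $n|S|$ internal nodes), your two lower bounds take a genuinely different and more self-contained route. For Sherali--Adams the paper simply cites Proposition 19 of Laurent's comparison paper to place $\frac{1}{2n-t}\ones$ in $SA^t(P)$; you instead rederive the $t=n-1$ case by exhibiting the explicit moment vector of the distribution uniform on $\{\bm 0, e_1,\dots,e_n\}$ and checking the lifted constraints directly, with the computation $\frac{n-|B|-1}{2(n+1)}$ showing exactly why level $n-1$ passes and level $n$ fails -- this makes the argument self-contained and pinpoints the role of charging mass to the infeasible vertex. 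For lift-and-project the paper invokes its general Theorem \ref{thm:lp_lower_bound} (whose hypotheses it verifies with $t=1$ using the points $\frac12 e^j$), which in turn rests on Lemma \ref{lem:height_lemma}; you instead run a clean induction on the dimension using Lemma \ref{lem:nest_l}, tracking the explicit point $\frac{1}{n+1}\ones$ via the convex combination $\frac{n}{n+1}u+\frac{1}{n+1}e_i$ across each disjunction, and you correctly verify that the ``no Hamming neighbor of $\bm 0$ in $S$'' hypothesis is what keeps each $e_i$ feasible and is inherited by the face $\{x_i=0\}$. What the paper's route buys is generality (Theorem \ref{thm:lp_lower_bound} is reused elsewhere); what yours buys is an explicit fractional witness surviving to level $n-1$ in both hierarchies and independence from the cited external results. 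One small point worth making explicit if you write this up: the reduction to $z=\bm 0$ via coordinate complementation should be stated as commuting with $SA$ and $L$ (it does, since both operators are symmetric under $x_j \mapsto 1-x_j$), which the paper also uses implicitly.
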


\begin{proof}
We will describe a branch-and-bound tree $\T$ as in the theorem. Let each $s \in S$ define a leaf of $\T$: for each $s \in S$, there is a $v^s \in \leaves(\T)$ such that $\{x : x_j = s_j\} \in C_{v^s}$ for all $j \in [n]$ (i.e. $P \cap C_{v^s} = \{s\}$). Clearly there exists such a tree, since a complete binary tree with $2^n$ leaves is such an example. Let $u \in \T$ be a node that is not an ancestor of any of the leaves $\{v^s : s \in S\}$. We claim that the atom of $u$ is integral (i.e. $P \cap C_u$ has integer extreme points), and so these are leaves of $\T$. Therefore, each internal node of $\T$ is an ancestor of some leaf $v^s$, and since there are $|S|$ such leaves, there are at most $n|S|$ internal nodes, completing this portion of the proof.

We now prove the claim that the atom of $u$ is integral for any $u$ that is not an ancestor of some $v^s$. Observe that $\{x \in [0,1]^n : C_u\}$ defines some face of $[0,1]^n$ that is disjoint from $S$, i.e. $\{x \in [0,1]^n : C_u\} \cap S = \emptyset$. Then, any extreme point $z$ of $\{x \in [0,1]^n : C_u\}$ is such that $z \in \{0,1\}^n \setminus S$. By the definition of $P$, it follows $\{0,1\}^n \setminus S \subseteq P$, and therefore $z \in P \cap \{0,1\}^n$, concluding the proof of the desired claim. 

Finally, consider the case $S = \{0\}$. Proposition 19 of \cite{laurent2003comparison} shows that the point $\ones \cdot \frac{1}{2n-t} \in SA^t(P)$, implying $\sum_{i \in [n]} x_i \geq 1$ is not valid for $SA^{n-1}(P)$, which in turn shows $SA^{n-1}(P) \not = P_I$ as desired. Furthermore, suppose there is some $z \in S$ such that $\{z' : \|z' - z\|_1 = 1\} \cap S = \emptyset$, by symmetry we can assume $z = 0$. Then, $\sum_{j \in n}x_j \geq 1$ is a facet of $P_I$, and the corresponding properties of Theorem \ref{thm:lp_lower_bound} are satisfied with $t = 1$: let $J$ be any subset of $n-1$ variables, then $x_j \geq 1$ is a facet of $\left(P^J \right)_I$ where $j = [n] \setminus J$ and $\frac{1}{2}e^j \in P^J$ (vector with all zeros, except a half in the $j$-th component) is a point violating this facet. Then, Theorem \ref{thm:lp_lower_bound} implies the final statement of the theorem.
\end{proof}

\section{Limits of Branch-and-Bound}\label{section5}

In this section, we rule out general statements on the relative strength of branch-and-bound. For example, perhaps we would like to say something like the following. If lift-and-project does really well (e.g. $L^2(P) = P_I$), there is some \textit{small} branch-and-bound tree that separates each $x \in P \setminus P_I$. Below, we show that statements of this form are not true.

\begin{theorem} \label{thm:bb_vs_lp}
Let $n$ be any nonnegative integer such that $n-1$ is divisible by $6$. Let $\mathbb{T}^\ell_k$ be the set of branch-and-bound trees with at most $k$ leaves. There is a polytope $P \subseteq [0,1]^n$ such that 
$$P_I = L^2(P) \subsetneq \bigcap_{\T \in \mathbb{T}^\ell_{2^{(n-1)/6}}} \T (P).$$

In other words, there is a point $x \in P \setminus P_I$ such that no branch-and-bound tree of size $\leq 2^{(n-1)/6}$ separates $x$, while lift-and-project retrieves the integer hull in two rounds. 
\end{theorem}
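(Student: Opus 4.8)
The plan is to build $P$ from $m := (n-1)/6$ \emph{independent gadgets}, each living on a disjoint block of $6$ coordinates, plus a coupling. Concretely, I would fix a single polytope $G \subseteq [0,1]^6$ with a fractional ``center'' $g^*$ (all coordinates strictly between $0$ and $1$) such that $G$ has \emph{lift-and-project rank exactly two at $g^*$}: namely $g^* \in L(G)$ but $g^* \notin L^2(G) = G_I$. The rank-one membership $g^* \in L(G)$ is the crucial feature, because it means $g^*$ survives \emph{any} single variable disjunction inside the gadget. I would take the target point to be $x^* = (g^*,\dots,g^*)$ (with an appropriate value on the one leftover coordinate) and take $P$ to be the product $G^{\,m}$ cut down by a coupling constraint whose purpose is explained below; the block size $6$ is chosen so that a gadget $G$ with all the required properties exists, which is what fixes the rate $1/6$.

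For the upper bound $L^2(P) = P_I$, I would first observe that $L$ distributes over Cartesian products: since $\conv((A\times H)\cup(B\times H)) = \conv(A\cup B)\times H$, one gets $L(G\times H) = L(G)\times L(H)$ and hence $L^2(G^{\,m}) = (L^2(G))^{\,m} = (G_I)^{\,m}$. The coupling constraint is then handled by conditioning: Lemma~\ref{lem:nest_l} shows $L^k$ commutes with fixing coordinates, and iterating it reduces $L^2(P)$ on the coupled polytope to a finite per-gadget verification on $6$ variables. This yields $L^2(P) = P_I$, and since $x^*$ violates the facet of $P_I$ carried by the coupling, $x^* \notin L^2(P)$.

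The substance is the branch-and-bound lower bound. The point of the coupling is that the \emph{pure} product $G^{\,m}$ would be trivially separable: its facets are per-gadget, so resolving a single $6$-variable block to integrality ($O(1)$ leaves) already cuts $x^*$. I would therefore design the coupling so that the inequality of $P_I$ violated by $x^*$ is a \emph{sum across all gadgets}, roughly $\sum_{g=1}^{m}\langle c, x^g\rangle \le \delta$, and so that this global inequality is genuinely facet-defining for $P_I$. Certifying such a sum over every atom of a tree then requires \emph{every} gadget to be resolved along \emph{every} root-to-leaf path; since $g^* \in L(G)$ means no single branch resolves a gadget, each block forces an independent binary choice, and a binary tree with fewer than $2^m$ leaves cannot make all $m$ choices on all paths. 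The technical heart is to turn this into the statement $x^* \in \T(P)$ for every tree $\T$ with at most $2^m = 2^{(n-1)/6}$ leaves, i.e.\ to exhibit $x^*$ as an explicit convex combination $x^* = \sum_{v\in\leaves(\T)}\lambda_v\,y^v$ with $y^v \in P_v$, reassembling $g^*$ from $g^* \in L(G)$ in each under-resolved gadget.

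I expect this convex-combination step to be the main obstacle, for two reasons. First, $x^*$ is fractional in every coordinate, so it lies in no single atom $P_v$ once any branching occurs; I cannot track it down a single path and must instead rule out \emph{all} separating inequalities at once, which I would attempt via a product measure on $P$ of mean $x^*$ pushed through the tree, arguing that the sub-$2^m$-leaf budget prevents the measure from being pinned in any gadget. Second, the coupling must simultaneously make the global inequality facet-defining for $P_I$ and keep $L^2(P) = P_I$ — and the clean product decomposition of $L$ breaks once the gadgets are coupled, so this balance is delicate. I would also likely need to develop a self-contained branch-and-bound lower-bound lemma (an analogue for trees of Theorem~\ref{thm:lp_lower_bound}), since the excerpt supplies lower bounds only for the $L$-hierarchy and none ready-made for $\T(P)$.
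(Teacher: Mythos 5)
Your high-level blueprint (many small gadgets whose fractional centers survive single branchings, tied together by one global coupling constraint) is in the right spirit, but as written it is a plan with the two decisive steps left unexecuted, and both are genuine gaps. First, you never exhibit the gadget $G$ or the coupling, and you yourself note that the product decomposition $L(G\times H)=L(G)\times L(H)$ breaks once the coupling is added, so the claim $L^2(P)=P_I$ is not established for any concrete $P$. Second, and more seriously, the branch-and-bound lower bound is exactly the part you defer: you need, for \emph{every} tree with at most $2^{(n-1)/6}$ leaves, a point of $\T(P)\setminus P_I$, and then you need a \emph{single} point lying in all of these $\T(P)$ simultaneously. Your proposed ``product measure pushed through the tree'' is not developed, and your instinct that a new self-contained lower-bound lemma is required misses that Lemma~\ref{lem:height_lemma} already does the aggregation: once each tree $\T$ is shown to retain some witness $x^{\T}$ on the far side of a facet of $P_I$, that lemma produces one point in $\bigcap_{\T}\conv(s^1,\dots,s^n,x^{\T})\subseteq\bigcap_{\T}\T(P)$ violating the facet. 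Without that step your argument only shows that each tree individually fails, not that the intersection strictly contains $P_I$.

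For comparison, the paper's construction resolves both difficulties with a much simpler gadget and coupling. The gadget is a triangle: $Q\subseteq[0,1]^{n-1}$ is the fractional stable set polytope of $m=(n-1)/3$ disjoint triangles intersected with $\sum_v x_v\ge m+\tfrac12$, so $Q$ contains \emph{no} integer points, and the coupling is a single extra coordinate $x_n$ with $P\cap\{x_n=1\}=Q$ and $P\cap\{x_n=0\}$ integral. Then $L^2(P)=P_I$ is immediate: one round of $L$ makes each inequality $x_{ia}+x_{ib}+x_{ic}\le 1$ valid, hence $L(Q)=\emptyset$, and the second round convexifies the disjunction on $x_n$. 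The lower bound is a depth-counting argument: a tree with at most $2^{m/2}$ leaves has a leaf of height at most $m/2$ in the relevant subtree, so at most $m/2$ branchings touch the $3m$ triangle variables, some triangle is left entirely unbranched, and setting its three coordinates to $\tfrac12$ gives a feasible point of that atom with $x_n=1$. This avoids your ``rank-exactly-two gadget on six coordinates'' entirely (the surviving fractional structure needs to survive only the branchings that actually occur on the leaf's path, not an arbitrary single disjunction), and it is why the exponent $(n-1)/6$ appears as $m/2$ with $m=(n-1)/3$ rather than as your gadget count. To salvage your approach you would need to (i) construct $G$ and the coupling explicitly and verify $L^2(P)=P_I$ despite the coupling, (ii) prove the per-tree leaf-survival claim, and (iii) invoke Lemma~\ref{lem:height_lemma} to merge the per-tree witnesses into one point; steps (i) and (ii) are where the real work lies and are currently absent.
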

\begin{proof}
We begin by defining the polytope of interest, $P$. Let $P \cap \{x \in \R^n : x_n = 0\} = \{x \in \{0,1\}^n : x_n = 0\}$. Further, let $P \cap \{x \in \R^n : x_n = 1\} = Q$, where $Q$ is the fractional stable set polytope defined below, intersected with the halfspace $\{x \in \R^{n-1} : \sum_{i \in [n-1]} x_i \geq \frac{n-1}{3} + \frac{1}{2} \}$. Let $m = \frac{n-1}{3}$, we define $G = (V,E)$ to be the disjoint union of $m$ triangles, in particular, $V = \bigcup_{i \in [m]}\{ia, ib, ic\}$ and $E = \bigcup_{i \in [m]} \{\{ia,ib\}, \{ib,ic\}, \{ia,ic\}\}$; so we define $Q = \{x \in \R^{n-1}_+ : x_u + x_v \leq 1 \ \forall \ \{u,v\} \in E, \sum_{v \in V} x_v \geq m + \frac{1}{2}\}$, and it should be clear that $Q \cap \{0,1\}^{n-1} = \emptyset$. 

First, we show $L^2(P) = P_I$. By definition and Lemma \ref{lem:nest_l}, $L^2(P) \subseteq \conv((L(P) \cap \{x : x_n = 0\}) \cup (L(P) \cap \{x : x_n = 1\})) = \conv(L(\{x \in P : x_n = 0\}) \cup L(\{x \in P : x_n = 1\}))$. We will show $L(\{x \in P : x_n = 1\}) = \emptyset$, which along with the fact that $\{x \in P : x_n = 0\} = P_I$, will prove the claim. To see that $L(\{x \in P : x_n = 1\}) = \emptyset$, observe that $x_{ia} + x_{ib} + x_{ic} \leq 1$ is valid for both $Q \cap \{x : x_{ia} = 0\}$ and $Q \cap \{x : x_{ia} = 1\}$, therefore, each of the $m$ such inequalities is valid for $L(Q)$, implying that $\sum_{v \in V} x_v \leq m$ is valid for $L(Q)$. This, along with the fact that $\sum_{v \in V} x_v \geq m + \frac{1}{2}$ is clearly valid for $L(Q)$ proves the desired claim.

Now we will show that there is some $x \in P \setminus P_I$ such that $x \in \T(P)$ for any $\T \in \mathbb{T}^\ell_{2^{(n-1)/6}}$ (note that $\frac{n-1}{6} = \frac{m}{2}$). Observe that if there is a leaf $z$ of $\T$ such that $\{x : x_n = 0\}$ is not in $C_z$ and there is some $i \in [m]$ such that no variable in $\{ia, ib, ic\}$ is fixed by a branching constraint of $z$, then there is a point $x \in P \cap \{x : C_z\}$ with $\sum_{v \in V} x_v \geq m + \frac{1}{2}$ (for example, the point setting $x_{ia} = x_{ib} = x_{ic} = 1/2$ and $x_{ja}, x_{jb}, x_{jc}$ to some integer, feasible value for all $j \not = i$). Let $w \in \T$ be a node of minimum height that includes $\{x : x_n = 1\}$ as a branching constraint (i.e. $w = \arg \min \{h(w) : \{x : x_n = 1\} \in C_w\}$). It is clear that its sibling, call it $w'$, is a node of minimum height that includes the constraint $\{x : x_n = 0\}$. Now we will consider two cases: in the first, suppose $x_n$ is fixed in a branching constraint in the first $m/2$ levels of the tree, i.e. $h(w) \leq \frac{m}{2}$. Therefore, at most $m/2$ variables in $V$ have been fixed in $C_w$. Since $\T \in \mathbb{T}^\ell_{2^{m/2}}$, the subtree rooted at $w$ must have $\leq 2^{m/2} - 1$ leaves, and so there is a leaf $w''$ of the subtree with less than $m/2$ branching constraints on the path from $w$ to $w''$ (i.e. $|C_{w''} \setminus C_{w}| \leq \frac{m}{2}-1$), which implies that at most $m-1$ variables in $V$ have been branched on and therefore there is some $i \in [m]$ such that no variable in $\{\{ia,ib\}, \{ib,ic\}, \{ia,ic\}\}$ is fixed in $C_{w''}$. Therefore, there is a point $x \in P \cap C_{w''}$ such that $\sum_{v \in V} x_v \geq m + \frac{1}{2}$ and $x_n = 1$. Now, consider the cases where $h(w') \geq \frac{m}{2} + 1$, or $x_n$ is never branched on in $\T$. Since $\T \in \mathbb{T}^\ell_{2^{m/2}}$, there must be a leaf $z'$ of $\T$ of height at most $\frac{m}{2}$; notice $x_n$ is not fixed by any constraint in $C_{z'}$. Clearly, there is a point $x \in P \cap C_{z'}$ such that $\sum_{v \in V} x_v \geq m + \frac{1}{2}$ and $x_n = 1$. Therefore, for any $\T \in \mathbb{T}^\ell_{2^{m/2}}$, there is some $x^{\T} \in \T(P)$ with $x_n = 1$. Finally, to show that there is some $\bar{x} \in \bigcap_{\T \in \mathbb{T}^\ell_{2^{m/2}}} \T(P)$, we apply Lemma \ref{lem:height_lemma} with $a = e_n, b = 0, b' = 1$ and $s^1, ..., s^n$ being the points $0, e^1, ..., e^{n-1}$, for example (there are several choices of $n$ affinely independent points in $\{0,1\}^{n-1}$), $R = \bigcup_{\T \in \mathbb{T}^\ell_{2^{m/2}}} x^{\T}$ where $x^{\T}$ is defined as above, and $r = x^{\T}$. The Lemma guarantees that such an $\bar{x}$ exists and $\bar{x}_n > 0$, and therefore $\bar{x} \in P \setminus P_I$. 
\end{proof}

\section{Trees with Bounded Height}\label{section6}

In this section, we introduce, and analyze, an operator that captures the strength of branch-and-bound trees with bounded height. 

\begin{definition}
We denote $\mathbb{T}^h_k = \{\T : h(\T) \leq k\}$ as the set of BB trees with height at most $k$. 
\end{definition}

\begin{definition}
We define the \textit{height $k$ branch-and-bound operator} as follows
$$T^k(P) = \bigcap_{\T \in \mathbb{T}^h_k} \T(P)$$
\end{definition}

The main result of this section is that the operator $T^k(\cdot)$ can be ``squeezed'' between two natural lift-and-project operators: the Balas-Ceria-Cornuéjols sequential convexification and the canonical lift-and-project. This is formalized by the following theorem.

\begin{theorem}\label{thm:bounded_height_main}
Let $P \subseteq [0,1]^n$ be a polytope. Then, 
$$L^k(P) \subsetneq T^k(P) \subsetneq B^k(P)$$
for all $k \in [n]$. Moreover, for any $n$ such that $n-1$ is divisible by $6$, there is a polytope $P \subseteq [0,1]^n$ such that $P_I = L^2(P) \subsetneq T^{(n-1)/6}(P)$.
\end{theorem}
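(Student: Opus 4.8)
The plan is to treat the theorem in three parts: the inclusion $L^k(P) \subseteq T^k(P)$, the inclusion $T^k(P) \subseteq B^k(P)$ (both for every polytope $P$ and every $k \in [n]$), and then the strict separation in the ``Moreover'' clause, which will simultaneously witness the strictness of the left inclusion. I would remark up front that the inclusions cannot be strict for \emph{every} $P$ and $k$: at $k=1$ one checks directly that a height-$1$ tree branching on $j$ gives $\conv((P\cap\{x:x_j=0\})\cup(P\cap\{x:x_j=1\}))$, so $T^1(P)=L(P)=B^1(P)$, and all three collapse to $P_I$ when $P=P_I$. Thus $\subsetneq$ is to be read as ``holds, and is strict for suitable $P$'', the substantive witness being the Moreover example.

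For $T^k(P) \subseteq B^k(P)$ I would show each term $P_{i_1,\dots,i_k}$ of the intersection defining $B^k(P)$ is realized by a single height-$k$ tree. Fix a $k$-subset $\{i_1,\dots,i_k\}$ and let $\T^\ast$ be the complete binary tree branching on exactly these $k$ variables, so its $2^k$ leaves enumerate all fixings $a\in\{0,1\}^k$. Then $h(\T^\ast)=k$, so $\T^\ast\in\mathbb{T}^h_k$, and $\T^\ast(P)=\conv\big(\bigcup_{a\in\{0,1\}^k} P\cap\{x:x_{i_\ell}=a_\ell\}\big)=\conv(\{x\in P:x_{i_\ell}\in\{0,1\}\})=P_{i_1,\dots,i_k}$. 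Hence $T^k(P)=\bigcap_{\T\in\mathbb{T}^h_k}\T(P)\subseteq\T^\ast(P)=P_{i_1,\dots,i_k}$, and intersecting over all $k$-subsets gives $T^k(P)\subseteq B^k(P)$. The strictness of this inclusion (for $k\ge 2$) is witnessed separately by a non-complete, ``unbalanced'' height-$k$ tree, whose relaxation intersects fewer atoms than the complete trees and can cut off a point of $B^k(P)$; I would treat this as a minor point relative to the Moreover separation.

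For $L^k(P)\subseteq T^k(P)$ I would prove, by induction on $k$, the statement: for every polytope $P$ and every tree $\T$ with $h(\T)\le k$, $L^k(P)\subseteq\T(P)$. The base case $k=0$ is immediate, the only such tree being the root with $\T(P)=P=L^0(P)$. For the step, take $\T$ with $h(\T)\le k$; if $\T$ is the root then $\T(P)=P\supseteq L^k(P)$, and otherwise the root branches on some $j$, splitting $\T$ into subtrees $\T_0,\T_1$ of height $\le k-1$ with $\T(P)=\conv(\T_0(P\cap\{x:x_j=0\})\cup\T_1(P\cap\{x:x_j=1\}))$. Using the single-coordinate term of the definition of $L$, $L^k(P)=L(L^{k-1}(P))\subseteq\conv((L^{k-1}(P)\cap\{x:x_j=0\})\cup(L^{k-1}(P)\cap\{x:x_j=1\}))$; rewriting each restriction via Lemma \ref{lem:nest_l} as $L^{k-1}(P\cap\{x:x_j=a\})$ and applying the induction hypothesis to $\T_0,\T_1$ on the polytopes $P\cap\{x:x_j=a\}$ yields $L^k(P)\subseteq\T(P)$. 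Intersecting over $\T\in\mathbb{T}^h_k$ gives $L^k(P)\subseteq T^k(P)$.

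For the Moreover clause I would reuse the polytope $P\subseteq[0,1]^n$ built in the proof of Theorem \ref{thm:bb_vs_lp}, for which $P_I=L^2(P)$ and there is a point $\bar x\in P\setminus P_I$ lying in $\T(P)$ for every $\T\in\mathbb{T}^\ell_{2^{(n-1)/6}}$. The key observation is that any tree of height $\le(n-1)/6$ has at most $2^{(n-1)/6}$ leaves, so $\mathbb{T}^h_{(n-1)/6}\subseteq\mathbb{T}^\ell_{2^{(n-1)/6}}$; intersecting over the smaller family therefore gives $T^{(n-1)/6}(P)=\bigcap_{\T\in\mathbb{T}^h_{(n-1)/6}}\T(P)\supseteq\bigcap_{\T\in\mathbb{T}^\ell_{2^{(n-1)/6}}}\T(P)\ni\bar x$. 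Since $\bar x\notin P_I=L^2(P)$, this gives $L^2(P)\subsetneq T^{(n-1)/6}(P)$, and because $L^{(n-1)/6}(P)\subseteq L^2(P)$ it also witnesses the strictness of the left inclusion at level $k=(n-1)/6$. I expect the main obstacle to be making the left-inclusion induction airtight: correctly quantifying the induction hypothesis over \emph{all} polytopes so it applies to the restricted polytopes $P\cap\{x:x_j=a\}$, and invoking Lemma \ref{lem:nest_l} to identify the restriction of $L^{k-1}(P)$ with $L^{k-1}$ of the restriction. The remaining steps are comparatively routine.
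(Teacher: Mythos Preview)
Your proposal is correct and mirrors the paper's proof almost exactly: the paper also splits the theorem into Lemma \ref{lem:bb_vs_bcc} ($T^k\subseteq B^k$ via the complete tree on a $k$-subset, with strictness shown by an explicit $3$-dimensional example), Lemma \ref{lem:bb_vs_lp} ($L^k\subseteq T^k$ by the same induction using Lemma \ref{lem:nest_l}), and Corollary \ref{cor:bb_vs_lp_height} (the Moreover clause deduced from Theorem \ref{thm:bb_vs_lp} via the observation $\mathbb{T}^h_{(n-1)/6}\subseteq\mathbb{T}^\ell_{2^{(n-1)/6}}$). Your caveat that the strict inclusions cannot hold for every $P$ and $k$---they coincide at $k=1$---is well taken and matches what the paper actually proves.

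The one soft spot is your sketch of the right-strictness witness: the advantage of a non-complete height-$k$ tree is not that it ``intersects fewer atoms'' but that it can branch on \emph{different} variables in the two subtrees, something no single $P_{i_1,\dots,i_k}$ can mimic. The paper's concrete witness is a polytope $P\subset[0,1]^3$ with $k=2$: the tree branching first on $x_1$, then on $x_3$ in the $x_1=0$ branch and on $x_2$ in the $x_1=1$ branch, already recovers $P_I$, whereas each $P_{i,j}$ still contains a fractional point, and Lemma \ref{lem:height_lemma} then yields a common point of $B^2(P)\setminus P_I$.
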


\begin{remark}
The height-$k$ operator $T^k(\cdot)$ and the skew $k$-tree operator $\T^{\pi,k}(\cdot)$ are incomparable in general. The results of Section 3 along with Theorem \ref{thm:bounded_height_main} show that there exist $P$ such that $\T^{\pi,k}(P) \subsetneq T^k(P)$. It is easy to construct a $P$ such that $T^k(P) \subsetneq \T^{\pi,k}(P)$: suppose $P = \{x : x_1,...,x_{k} = 1, x_{k+1},...,x_n = \frac{1}{2}\}$, then note that $\T^{\pi,k}(P) = P$ while $T^k(P) = \emptyset$.
\end{remark}

We prove the above theorem in three separate parts. 

\begin{lemma}\label{lem:bb_vs_bcc}
$T^k(P) \subsetneq B^k(P)$ for all $k \in [n]$. 
\end{lemma}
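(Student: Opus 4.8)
The plan is to prove the containment $T^k(P)\subseteq B^k(P)$ for every polytope $P$ and every $k\in[n]$, and then, for each $k$, to exhibit a polytope for which the containment is strict. For the containment, the point is that each factor $P_{i_1,\dots,i_k}$ defining $B^k$ is itself $\T(P)$ for an explicit height-$k$ tree. Fix a $k$-subset $S=\{i_1,\dots,i_k\}$ and let $\T_S$ be the complete binary tree that branches on $i_1$ at the root, on $i_2$ at every node of height $1$, and so on, so that its $2^k$ leaves are indexed by the patterns $a\in\{0,1\}^S$ with atom $P\cap\{x_j=a_j : j\in S\}$. Then $h(\T_S)=k$, so $\T_S\in\mathbb{T}^h_k$, and since the leaf atoms of $\T_S$ are exactly the pieces of $\{x\in P:x_j\in\{0,1\}\ \forall j\in S\}$ split by their values on $S$, we get $\T_S(P)=\conv\bigl(\{x\in P:x_j\in\{0,1\}\ \forall j\in S\}\bigr)=P_{i_1,\dots,i_k}$. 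As $T^k(P)=\bigcap_{\T\in\mathbb{T}^h_k}\T(P)$ intersects over a family containing every $\T_S$, it follows that $T^k(P)\subseteq\bigcap_S\T_S(P)=B^k(P)$.

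For strictness the key reduction is that a single tree already bounds $T^k$ from above: for any $\T_0\in\mathbb{T}^h_k$ and any point $p$, if $p\notin\T_0(P)$ then $p\notin T^k(P)$, because $T^k(P)\subseteq\T_0(P)$. Hence it suffices, for each $k$, to produce a polytope $P$, a single adaptive height-$k$ tree $\T_0$, and a point $p$ with $p\in B^k(P)$ but $p\notin\T_0(P)$; this immediately yields $p\in B^k(P)\setminus T^k(P)$ and therefore $T^k(P)\subsetneq B^k(P)$. Note that I do not need $\T_0$ to integralize $P$ entirely; I only need it to cut one point that every complete $k$-subset convexification retains.

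The witness I would build exploits the one real difference between the two operators: $B^k$ convexifies a single \emph{fixed} set of $k$ variables completely, whereas a height-$k$ tree may branch on a \emph{different} variable along each root-to-leaf path, so its branching set is adaptive. I would therefore design $P$ (most naturally as a fractional stable-set polytope of a carefully chosen graph, or via an explicit indexing gadget) so that an adaptive depth-$k$ branching order resolves a fractional structure while no fixed set of $k$ coordinates does; concretely, the target point $p$ is a half-integral point supported on an odd sub-structure that an adaptive tree can peel off in $k$ branches but that survives complete convexification of every $k$-subset, so that $p\in\bigcap_S P_{i_1,\dots,i_k}=B^k(P)$ while $p\notin\T_0(P)$.

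The main obstacle is the construction and its verification, not the containment. Low-dimensional attempts fail because forcing the atoms of $\T_0$ to separate $p$ tends to make the slices of $P$ so thin that $B^k(P)$ collapses to $P_I$ and the gap disappears; the separation between adaptive and non-adaptive branching is genuine only when the gadget is spread over enough variables, so the instance will be $k$-dependent in size. The two facts requiring care are (i) that the chosen height-$k$ tree $\T_0$ indeed excludes $p$, i.e.\ exhibiting a single inequality valid on all atoms $P\cap\{x:C_v\}$, $v\in\leaves(\T_0)$, yet violated by $p$, and (ii) that $p$ lies in \emph{every} $P_{i_1,\dots,i_k}$, which amounts to writing $p$, for each $k$-subset $S$, as a convex combination of points of $P$ that are integral on $S$.
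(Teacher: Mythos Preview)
Your containment argument is correct and essentially identical to the paper's: for each $k$-subset $S$ the complete binary tree $\T_S$ of height $k$ has $\T_S(P)=P_{i_1,\dots,i_k}$, so $T^k(P)\subseteq\bigcap_S P_{i_1,\dots,i_k}=B^k(P)$.

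The strictness half, however, is only a plan, not a proof. You never construct a witness polytope, a tree $\T_0$, or a point $p$, and the heuristics you offer (``fractional stable-set polytope of a carefully chosen graph'', ``indexing gadget'') are not backed by any concrete object or verification. More importantly, your diagnosis that ``low-dimensional attempts fail'' and that ``the separation between adaptive and non-adaptive branching is genuine only when the gadget is spread over enough variables'' is simply wrong. The paper dispatches strictness with a single explicit polytope in $\R^3$: take $P=\conv\bigl(\{(0,0,0),(1,0,0),(0,1,1),(1,1,1),(0,0,\tfrac12),(\tfrac12,0,1),(1,\tfrac12,1)\}\bigr)$, so that $P_I=\{x\in[0,1]^3:x_3-x_2=0\}$. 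An \emph{adaptive} height-$2$ tree (branch on $x_1$; on the $x_1=0$ side branch on $x_3$, on the $x_1=1$ side branch on $x_2$) gives $\T(P)=P_I$, hence $T^2(P)=P_I$. For $B^2(P)$ one checks that each of the three fractional generators survives the corresponding $P_{i,j}$, namely $(0,0,\tfrac12)\in P_{1,2}$, $(\tfrac12,0,1)\in P_{2,3}$, $(1,\tfrac12,1)\in P_{1,3}$, and then Lemma~\ref{lem:height_lemma} produces a point $\bar x\in\bigcap_{i,j}P_{i,j}$ with $\bar x_3-\bar x_2>0$, so $B^2(P)\neq P_I$. This is exactly the adaptive-versus-fixed phenomenon you describe, but realized in three dimensions; your intuition that the slices necessarily become ``so thin that $B^k(P)$ collapses to $P_I$'' is refuted by this example. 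You should replace the speculative paragraph with an explicit construction along these lines.
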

\begin{proof}
We will start by showing that $T^k(P) \subseteq B^k(P)$ for any $k \in [n]$. In particular, we will show that for any subset $\{i_1,...,i_k\} \subset [n]$, there is a tree $\T \in \mathbb{T}^h_k$ such that $\T(P) \subseteq P_{i_1,...,i_k}$. Specifically, consider a complete tree $\T$ of height $k$ that branches only on variables $\{i_1,...,i_k\}$ (i.e. each of the $2^k$ leaves of $\T$ fixes the variables $\{i_1,...,i_k\}$ to one of the $2^k$ possible settings). Let $z$ be an extreme point of $\T(P)$, so $z$ is in the atom of a leaf $v \in \leaves(\T)$. Therefore, $z \in \{x : C_v\} \subset \{x : x_j \in \{0,1\} \ \forall \ j \in \{i_1,...,i_k\}\} \subset P_{i_1,...,i_k}$, where the last inclusion is by definition of the latter set. 

Now, we will give an example in $\R^3$ where the inclusion $T^k(P) \subset B^k(P)$ is strict. Let 
$$P = \conv(\{(0,0,0), (1,0,0), (0,1,1), (1,1,1), (0,0,\frac{1}{2}), (\frac{1}{2},0,1), (1,\frac{1}{2},1)\})$$ 
and observe that $P_I = \{x : ax = b\} \cap [0,1]^n$ for $a = (0,-1,1)$ and $b = 0$. Let $\T \in \mathbb{T}^h_2$ be a tree with leaves $v^1,v^2,v^3,v^4$ where $C_{v^1} = \{x_1 = 0, x_3 = 0\}, C_{v^2} = \{x_1 = 0, x_3 = 1\}, C_{v^3} = \{x_1 = 1, x_2 = 0\}, C_{v^4} = \{x_1 = 1, x_2 = 1\}$ and observe that $\T(P) = P_I$, and therefore $T^2(P) = P_I$. We will now show that $B^2(P) \not = P_I$. Note that $(0,0,\frac{1}{2}) \in P_{1,2}$, $(\frac{1}{2}, 0, 1) \in P_{2,3}$, and $(1,\frac{1}{2},1) \in P_{1,3}$, and of course $P_I \subset P_{i,j}$ for all $i,j \in [3]$. Therefore, we see that there is a point $\bar{x} \in B^2(P) = \bigcap_{i,j \in [3]} P_{i,j}$ with $a\bar{x} > 0$ by applying Lemma \ref{lem:height_lemma} with $a,b$ as defined above, $b' = \frac{1}{2}$ and $R = \{(0,0,\frac{1}{2}), (\frac{1}{2},0,1), (1,\frac{1}{2},1)\}$, and $s^1,...,s^3$ being any three affinely independent points of $P_I$. 
\end{proof}

\begin{lemma}\label{lem:bb_vs_lp}
$L^k(P) \subsetneq T^k(P)$ for all $k \in [n]$.
\end{lemma}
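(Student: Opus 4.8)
The plan is to split the claim into its two natural halves: the containment $L^k(P) \subseteq T^k(P)$, which I would prove for \emph{every} polytope $P$ and every $k \in [n]$, and the strict-inclusion part, which only needs to be witnessed on a single instance (note that for $k=1$ one has $T^1(P)=L^1(P)$, so strictness can only hold for $k \ge 2$). Since $T^k(P) = \bigcap_{\T \in \mathbb{T}^h_k} \T(P)$, the containment reduces to the per-tree statement: for every BB tree $\T$ with $h(\T) \le k$, one has $L^k(P) \subseteq \T(P)$. Intersecting this over all $\T \in \mathbb{T}^h_k$ then gives $L^k(P) \subseteq T^k(P)$, so the real work is establishing the per-tree inclusion.

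I would prove the per-tree inclusion by induction on $k$. For the base case ($k=0$, i.e.\ a single-node tree) we have $\T(P)=P \supseteq L^k(P)$, using that $L(P)\subseteq P$ and hence $L^k(P)\subseteq P$. For the inductive step with $k \ge 1$ and $\T$ not a single node, let $x_j$ be the variable branched on at the root and let $\T_0,\T_1$ be the subtrees hanging off the children carrying $x_j=0$ and $x_j=1$; each has height $\le k-1$. The key bookkeeping observation is that, when $\T_a$ is viewed as a tree applied to the restricted polytope $P\cap\{x:x_j=a\}$, its leaf-atoms coincide exactly with the original atoms $P_v$ of the leaves $v$ lying below the $x_j=a$ child (the constraint $x_j=a$ is simply absorbed into the base polytope), so that $\T(P)=\conv\bigl(\T_0(P\cap\{x_j=0\})\,\cup\,\T_1(P\cap\{x_j=1\})\bigr)$. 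Then, keeping only the $i=j$ term in the intersection defining $L$, I get $L^k(P)=L(L^{k-1}(P))\subseteq \conv\bigl((L^{k-1}(P)\cap\{x_j=0\})\cup(L^{k-1}(P)\cap\{x_j=1\})\bigr)$; rewrite each piece via Lemma~\ref{lem:nest_l} as $L^{k-1}(P\cap\{x_j=a\})$; and finally apply the induction hypothesis to $\T_a$ to bound $L^{k-1}(P\cap\{x_j=a\})\subseteq \T_a(P\cap\{x_j=a\})$. Chaining these inclusions yields $L^k(P)\subseteq\T(P)$. The main obstacle here is purely the bookkeeping: verifying that restricting a subtree to the child-fixed polytope recovers precisely the atoms $P_v$, correctly tracking that each subtree loses exactly one level of height, and confirming that Lemma~\ref{lem:nest_l} is being invoked at level $k-1$.

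For strictness I would not build a new example but reuse the construction of Theorem~\ref{thm:bb_vs_lp}. Fix $n$ with $n-1$ divisible by $6$ and $n\ge 13$, set $k=(n-1)/6 \ge 2$, and take the polytope $P$ of that theorem, which satisfies $P_I=L^2(P)\subsetneq \bigcap_{\T\in\mathbb{T}^\ell_{2^{(n-1)/6}}}\T(P)$. Since any tree of height $\le k$ has at most $2^k$ leaves, $\mathbb{T}^h_k\subseteq \mathbb{T}^\ell_{2^k}=\mathbb{T}^\ell_{2^{(n-1)/6}}$, and intersecting $\T(P)$ over a smaller family only enlarges the result, so $T^k(P)\supseteq\bigcap_{\T\in\mathbb{T}^\ell_{2^{(n-1)/6}}}\T(P)\supsetneq P_I$. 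On the other hand, because $P_I\subseteq L^k(P)\subseteq L^2(P)=P_I$ for all $k\ge 2$, we have $L^k(P)=P_I$. Hence $L^k(P)=P_I\subsetneq T^k(P)$, giving the matched-level strict inclusion and completing the lemma.
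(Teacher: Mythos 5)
Your proof is correct and follows essentially the same route as the paper: the containment $L^k(P)\subseteq T^k(P)$ is proved by induction on $k$ via the root-branching decomposition of a height-$k$ tree together with Lemma~\ref{lem:nest_l}, exactly as in the paper's argument. For strictness you reuse the polytope of Theorem~\ref{thm:bb_vs_lp} with the observation $\mathbb{T}^h_k\subseteq\mathbb{T}^\ell_{2^k}$, which is precisely how the paper obtains it (in Corollary~\ref{cor:bb_vs_lp_height}), so no substantive difference there either.
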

\begin{proof}
We will prove the lemma by induction on $k$. First, observe that the definitions imply $L(P) = T(P)$ (since $x \in \conv(\{x \in P : x_j \in \{0,1\}\})$ if and only if $x \in \T(P)$ for some $\T$ of height $1$). Now, assuming that $L^k(P) \subseteq T^k(P)$, we will show that $L^{k+1}(P) \subseteq T^{k+1}(P)$.

Note $L^{k+1}(P) = \bigcap_{j \in [n]} \conv(L^k(\{x \in P : x_j = 0\}) \cup L^k(\{x \in P : x_j = 1\}))$ by definition of $L^{k+1}(\cdot)$ and Lemma \ref{lem:nest_l}. Therefore, by the induction hypothesis, $L^{k+1}(P) \subseteq \bigcap_{j \in [n]} \conv(T^k(\{x \in P : x_j = 0\}) \cup T^k(\{x \in P : x_j = 1\}))$. We will proceed by showing
$$\bigcap_{j \in [n]} \conv\left(T^k(\{x \in P : x_j = 0\}) \cup T^k(\{x \in P : x_j = 1\})\right) \subseteq T^{k+1}.$$
Let $x \in \bigcap_{j \in [n]} \conv\left(T^k(\{x \in P : x_j = 0\}) \cup T^k(\{x \in P : x_j = 1\})\right)$, we will show that $x \in \T(P)$ for any $\T \in \mathbb{T}^h_{k+1}$. Let $j \in [n]$ be the variable branched on at the root of $\T$, and let $\T^0$ be the subtree rooted at the child of the root corresponding to branch $x_j = 0$ (and let $\T^1$ be defined similarly). Observe that $\T^0,\T^1 \in \mathbb{T}^h_k$. We know that $x \in \conv(x^{j,0}, x^{j,1})$, for some $x^{j,a} \in T^k(\{x \in P : x_j = a\})$ for $a \in \{0,1\}$. By definition of $T^k(\cdot)$ and the fact that $\T^0 \in \mathbb{T}^h_k$, we know that $x^{j,0} \in \T^0(\{x \in P : x_j = 0\})$ (and similarly $x^{j,1} \in \T^1(\{x \in P : x_j = 1\})$). Therefore, $x^{j,0}, x^{j,1} \in \T(P)$, which clearly implies $x \in \T(P)$ as desired.
\end{proof}

% \red{The proof of the below lemma makes comparisons between sets of dimension $n$ and $n-1$. This should be resolved as indicated in the comment above Lemma \ref{lem:nest_l}. Will go back and make the change later.}

The following is a direct corollary of Theorem \ref{thm:bb_vs_lp}.

\begin{cor}\label{cor:bb_vs_lp_height}
For any $n$ such that $n-1$ is divisible by $6$, there is a polytope $P \subseteq [0,1]^n$ such that $P_I = L^2(P) \subsetneq T^{(n-1)/6}(P)$.
\end{cor}

\begin{proof}[Proof of Theorem \ref{thm:bounded_height_main}]
The theorem is directly implied by the combination of Lemma \ref{lem:bb_vs_bcc}, Lemma \ref{lem:bb_vs_lp}, and Corollary \ref{cor:bb_vs_lp_height}.
\end{proof}

\begin{remark}
There is a simple construction, of similar spirit to that of Theorem \ref{thm:bb_vs_lp}, that shows that for any $n \geq 3$, there is a polytope $P \subseteq [0,1]^n$ such that $P_I = L^2(P) \subsetneq T^{n-1}(P)$.
\end{remark}

While at first glance, the Balas-Ceria-Cornuéjols operator and the canonical lift-and-project operator do not seem to differ significantly, the following direct corollary of Theorem \ref{thm:bounded_height_main} shows that applying the intersection ``as you go'' as opposed to applying it at the end of the disjunctive procedure can in fact have quite a significant impact. 
\begin{cor}
For any $n$ such that $n-1$ is divisible by $6$, there is a polytope $P \subseteq [0,1]^n$ such that $P_I = L^2(P) \subsetneq B^{(n-1)/6}(P)$.
\end{cor}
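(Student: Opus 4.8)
The plan is to observe that this is an immediate consequence of chaining the inclusions already packaged in Theorem \ref{thm:bounded_height_main}; no new construction is required. I would reuse the very polytope $P$ that witnesses the \emph{moreover} clause of Theorem \ref{thm:bounded_height_main} (equivalently, the polytope built in the proof of Theorem \ref{thm:bb_vs_lp} and invoked in Corollary \ref{cor:bb_vs_lp_height}), for which we already know $P_I = L^2(P) \subsetneq T^{(n-1)/6}(P)$.

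First I would record the two facts that Theorem \ref{thm:bounded_height_main} supplies for this fixed $P$, specializing to $k = (n-1)/6$. The moreover clause gives the strict separation $P_I = L^2(P) \subsetneq T^{(n-1)/6}(P)$, so in particular $T^{(n-1)/6}(P)$ strictly contains $P_I$. The general inclusion in the theorem gives $T^k(P) \subseteq B^k(P)$ for every $k \in [n]$, so at $k = (n-1)/6$ we get $T^{(n-1)/6}(P) \subseteq B^{(n-1)/6}(P)$.

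Then I would simply chain these. Combining the two facts yields
$$P_I = L^2(P) \subsetneq T^{(n-1)/6}(P) \subseteq B^{(n-1)/6}(P),$$
whence $P_I \subsetneq B^{(n-1)/6}(P)$. Since $P_I = L^2(P)$ holds for this same $P$, this is exactly the claimed statement.

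There is no substantive obstacle here, which is why the result is stated as a direct corollary. The only point requiring a moment's care is that a \emph{single} polytope must simultaneously satisfy $L^2(P) = P_I$ and the failure of $B^{(n-1)/6}$ to reach $P_I$; this is automatic because the witness polytope of Theorem \ref{thm:bounded_height_main} has $L^2(P) = P_I$ built in, and the inclusion $T^k \subseteq B^k$ transfers the strictness of $T^{(n-1)/6}(P) \supsetneq P_I$ upward to $B^{(n-1)/6}$. The qualitative takeaway, which the preceding remark stresses, is that intersecting ``as you go'' (the operator $L^k$) versus intersecting only at the end (the operator $B^k$) can produce a gap as dramatic as two rounds of $L$ recovering $P_I$ while $B^{(n-1)/6}$ still falls short.
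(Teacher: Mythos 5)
Your proposal is correct and matches exactly what the paper intends: the corollary is obtained by taking the witness polytope from the ``moreover'' clause of Theorem \ref{thm:bounded_height_main} (i.e., the construction of Theorem \ref{thm:bb_vs_lp}) and chaining $P_I = L^2(P) \subsetneq T^{(n-1)/6}(P) \subseteq B^{(n-1)/6}(P)$. Your remark that only the non-strict inclusion $T^k(P) \subseteq B^k(P)$ is needed here is the right level of care, and nothing further is required.
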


\begin{remark}
Theorem \ref{thm:bounded_height_main} gives a necessary condition for branch-and-bound to be advantagous compared to lift-and-project. In particular, if the best compact formulation constructed by branch-and-bound $Q_{BB}$ comes from a ``short, balanced'' tree (i.e. a tree in $\mathbb{T}^h_k$), Sherali-Adams is also able to construct a compact formulation $Q_{SA}$ such that $\max\{ cx : (x,y) \in Q_{SA}\} \leq  \max\{ cx : (x,y) \in Q_{BB}\}$. 
\end{remark}

% The relationship between these lift-and-project cuts and branch-and-bound was previously studied in \cite{basu2023complexity}, but this was in quite a different context. They showed that, given a branch-and-bound tree such that $cx \leq \delta$ is valid for $\T(P)$, it is possible to construct a cutting planes certificate of size $O(|\T|)$ using lift-and-project cuts, i.e. at each iteration of the certificate, they choose a variable disjunction and add a cutting plane valid for the disjunction. The strength of $L^k(P)$ is that it involves each of the $n$ variables and convexifies after each of the $k$ applications of the operator. To our knowledge, the relationship of Corollary \ref{cor:bb_vs_b} has not been established before.

{\bf Acknowledgements:} This research was supported in part by ONR grant N00014-22-1-2528.

\bibliography{bib}
\bibliographystyle{alpha}

\end{document}